\newif\ifpdf\ifx\pdfoutput\undefined\pdffalse\else\pdfoutput=1\pdftrue\fi
  \else\usepackage{graphicx}\fi
\definecolor{darkblue}{rgb}{0.0,0.0,0.6}
\newcommand{\highlight}[1]{{#1}}  
\newcommand{\spa}[1]{{\mathrm{span}\{#1\}}}
\newcommand{\nul}[1]{{\mathrm{null}\{#1\}}}
\newtheorem{remark}{Remark}
\newtheorem{assumption}{Assumption}
\newcommand{\R}{{\mathbb{R}}}
\newcommand{\bx}{{\mathbf{x}}}
\newcommand{\bu}{{\mathbf{u}}}
\newcommand{\bv}{{\mathbf{v}}}
\newcommand{\bq}{{\mathbf{q}}}
\newcommand{\bz}{{\mathbf{z}}}
\newcommand{\f}{{\mathbf{f}}}
\newcommand{\g}{{\mathbf{g}}}
\newcommand{\df}{{\nabla\mathbf{f}}}
\newcommand{\dg}{{\nabla\mathbf{g}}}
\newcommand{\W}{{W}}
\newcommand{\BW}{{\tilde{W}}}
\newcommand{\bw}{{\tilde{w}}}
\newcommand{\one}{{\mathbf{1}}}
\newcommand{\Fro}{{\mathrm{F}}}
\newcommand{\Lf}{{L_{\f}}}
\newcommand{\mug}{{\mu_{\mathbf{g}}}}
\newcommand{\mubarf}{{\mu_{\bar{f}}}}
\newcommand{\trace}{{\mathrm{trace}}}
\newcommand{\T}{{\mathrm{T}}}
\newcommand{\degree}{{\mathrm{deg}}}
\DeclareMathOperator*{\Min}{minimize}
\title{EXTRA: An Exact First-Order Algorithm for Decentralized Consensus Optimization\thanks{This work is supported by Chinese Scholarship Council (CSC) grants 201306340046 and 2011634506, NSFC grant 61004137, MOF/MIIT/MOST grant BB2100100015, and NSF grants DMS-0748839 and DMS-1317602.}}
\author{Wei Shi \and Qing Ling \and Gang Wu \and Wotao Yin}
\begin{document}

\maketitle
\begin{abstract}
Recently, there have been growing interests in solving  consensus
optimization problems in a multi-agent network. In this paper, we
develop a decentralized algorithm for the consensus optimization
problem
$$\Min\limits_{x\in\R^p}~\bar{f}(x)=\frac{1}{n}\sum\limits_{i=1}^n f_i(x),$$
which is defined over a connected network of $n$ agents, where
each function $f_i$ is held privately by agent $i$ and encodes the agent's
data and objective. All the agents shall collaboratively find the
minimizer while each agent can only communicate with its
neighbors. Such a computation scheme avoids a data fusion center
or long-distance communication and offers better load balance to
the network.


This paper proposes a novel decentralized \uline{ex}act
firs\uline{t}-orde\uline{r} \uline{a}lgorithm (abbreviated as
EXTRA) to solve the consensus optimization problem. ``Exact''
means that it can converge to the exact solution. EXTRA can use a
fixed  large step size, {which is independent of the network size}, and has synchronized iterations. The
local variable of every agent $i$ converges uniformly and
consensually to an exact minimizer of $\bar{f}$. In contrast, the
well-known decentralized gradient descent (DGD) method must use
diminishing step sizes in order to converge to an exact minimizer.
EXTRA and DGD have the same choice of mixing matrices and similar per-iteration complexity.  EXTRA, however, uses
the gradients of last two iterates, unlike DGD which uses just  that of last iterate.

EXTRA has the best known convergence {rates} among the existing
first-order decentralized algorithms for decentralized consensus
optimization with convex Lipschitz--differentiable objectives.
Specifically, if $f_i$'s are convex and have Lipschitz continuous
gradients, EXTRA has an ergodic convergence rate
$O\left(\frac{1}{k}\right)$ in terms of the first-order optimality
residual. If $\bar{f}$ is also (restricted) strongly convex,
{EXTRA converges to an optimal solution at a linear rate
$O(C^{-k})$ for some constant $C>1$.}
\end{abstract}

\begin{keywords}
Consensus optimization, decentralized optimization, gradient method, linear convergence
\end{keywords}


\section{Introduction}\label{sec:intro}

This paper focuses on \textit{decentralized consensus
optimization}, a problem defined on a connected
network and solved by   $n$ agents  cooperatively
\begin{align} \label{eq:F}
\Min\limits_{x\in\R^p}~\bar{f}(x)=\frac{1}{n}\sum\limits_{i=1}^n
f_i(x),
\end{align}
over a common variable $x\in\R^p$, and for each agent
$i$, $f_i: \R^p\rightarrow \R$ is a convex function privately
known by the agent. We assume that $f_i$'s are continuously
differentiable and will introduce a novel first-order algorithm to
solve \eqref{eq:F} in a decentralized manner. We stick to the
synchronous case in this paper, that is, all the agents carry out
their iterations at the same time intervals.

Problems of the form (\ref{eq:F}) that require decentralized
computation are found widely in various scientific and engineering
areas including sensor network information processing,
multiple-agent control and coordination, as well as distributed
machine learning. Examples and works include decentralized
averaging \cite{Dimakis2010,Johansson2008,Xiao2007}, learning
\cite{Forero2010,Mateos2010,Predd2009}, estimation
\cite{Bazerque2010,Bazerque2011,Kekatos2013,Ling2010,Schizas2008},
sparse optimization \cite{Ling2013_2,Kun2013}, and low-rank matrix
completion \cite{Ling2012_2} problems. Functions $f_i$ can take
forms of least squares \cite{Dimakis2010,Johansson2008,Xiao2007},
regularized least squares
\cite{Bazerque2010,Bazerque2011,Forero2010,Ling2010,Mateos2010},
as well as more general ones \cite{Predd2009}. The solution $x$
can represent, for example, the average temperature of a room
\cite{Dimakis2010,Xiao2007}, frequency-domain occupancy of spectra
\cite{Bazerque2010,Bazerque2011}, states of a smart grid system
\cite{Gan2013,Kekatos2013}, sparse vectors
\cite{Ling2013_2,Kun2013}, and a matrix factor \cite{Ling2012_2}
and so on. In general, decentralized optimization fits the
scenarios in which the data is collected and/or stored in a
distributed network, a fusion center is either infeasible or not
economical, and/or computing is required to be performed in a
decentralized and collaborative manner by multiple agents.

\subsection{Related Methods}

Existing first-order decentralized methods for solving
(\ref{eq:F}) include the (sub)gradient method
\cite{Matei2011,Nedic2009,Kun2014}, the (sub)gradient-push method
\cite{Nedic2013,Nedic2014}, the fast (sub)gradient method
\cite{Chen2012,Jakovetic2014}, and the dual averaging method
\cite{Duchi2012}. Compared to classical centralized algorithms,
decentralized algorithms encounter more restrictive assumptions
and typically worse convergence rates. {Most of the
above algorithms are analyzed under the assumption of bounded
(sub)gradients. \highlight{Work \cite{Matei2011} assumes bounded Hessian for
strongly convex functions.} Recent work \cite{Kun2014} relaxes such
assumptions for decentralized gradient descent. \highlight{When (\ref{eq:F})
has additional constraints that force $x$ in a bounded set, which
also leads to bounded (sub)gradients and Hessian, projected
first-order algorithms are applicable \cite{Ram2010,Zhu2012}.}}


When using a fixed step size, these algorithms do not converge to
a solution $x^*$ of problem \eqref{eq:F} but a point in its
neighborhood no matter whether $f_i$'s are differentiable or not
\cite{Kun2014}. This motivates the use of certain diminishing step
sizes in \cite{Chen2012,Duchi2012,Jakovetic2014} to guarantee
convergence to $x^*$. The rates of convergence are generally
weaker than their analogues in centralized computation.
\highlight{For the general convex case and under the bounded
(sub)gradient (or Lipschitz--continuous objective) assumption,
\cite{Chen2012} shows that diminishing step sizes
$\alpha_k=\frac{1}{\sqrt{k}}$  lead to  a convergence rate of
$O\left(\frac{\ln{k}}{\sqrt{k}}\right)$ in terms of the running
best of objective error, and  \cite{Duchi2012} shows that the dual
averaging method  has a rate of $O\left(\frac{\ln
k}{\sqrt{k}}\right)$ in the ergodic sense in terms of objective
error. For the general convex case, under assumptions of fixed
step size and Lipschitz continuous, bounded gradient,
\cite{Jakovetic2014} shows an outer--loop convergence rate of
$O\left(\frac{1}{k^2}\right)$ in terms of objective error,
utilizing Nesterov's acceleration, provided that the inner loop
performs substantial consensus computation, without which
diminishing step sizes $\alpha_k=\frac{1}{k^{1/3}}$ lead to a
reduced rate of $O\left(\frac{\ln{k}}{k}\right)$. The
(sub)gradient-push method  \cite{Nedic2013} can be implemented in
a dynamic digraph and, under the bounded (sub)gradient assumption
and diminishing step sizes $\alpha_k =
O\left(\frac{1}{\sqrt{k}}\right)$, has a rate of
$O\left(\frac{\ln{k}}{\sqrt{k}}\right)$ in the ergodic sense in
terms of  objective error. A better
 rate of  $O\left(\frac{\ln{k}}{k}\right)$ is  proved for the (sub)gradient-push method in \cite{Nedic2014} under the strong convexity  and Lipschitz gradient assumptions, in terms of expected objective error plus squared consensus residual.

Some of other related algorithms are as follows.
For general
convex functions and assuming closed and bounded
feasible sets, the decentralized asynchronous ADMM \cite{Wei2013}
is proved to have a rate of $O\left(\frac{1}{k}\right)$  in terms
of  expected objective error and feasibility violation. The
augmented Lagrangian based primal-dual methods  have linear
convergence under strong convexity and Lipschitz gradient
assumptions \cite{Chang2014,Shi2014} or under the
positive-definite bounded Hessian assumption
\cite{Iutzeler2013,Jakovetic2013}.


Our proposed algorithm is a synchronous gradient-based algorithm
that has a rate of $O\left(\frac{1}{k}\right)$ for general convex
objectives with Lipschitz differentials and has a linear rate once the sum of, rather than
individual, functions $f_i$ is also (restricted) strongly convex.}


\subsection{Notation}

Throughout the paper, we let agent $i$ hold a \emph{local copy} of
the global variable $x$, which {is} denoted by
$x_{(i)}\in\R^p$; its value at iteration $k$ is denoted by
$x_{(i)}^{k}$. We introduce an aggregate objective function of the
local variables
$$
\f(\bx)\triangleq\sum\limits_{i=1}^{n} f_i(x_{(i)}),
$$
where
$$
  \bx\triangleq\left(
     \begin{array}{ccc}
       \textrm{---}& x_{(1)}^\T & \textrm{---} \\
       \textrm{---}& x_{(2)}^\T & \textrm{---} \\
       &\vdots& \\
       \textrm{---}& x_{(n)}^\T & \textrm{---} \\
     \end{array}
   \right)\in\R^{n\times p}.
$$
The gradient of $\f(\bx)$ is  defined by
$$
   \df(\bx)\triangleq\left(
     \begin{array}{ccc}
       \textrm{---}& \nabla^\T f_1(x_{(1)}) & \textrm{---} \\
       \textrm{---}& \nabla^\T f_2(x_{(2)}) & \textrm{---} \\
       &\vdots & \\
       \textrm{---}& \nabla^\T f_n(x_{(n)}) & \textrm{---} \\
     \end{array}
   \right)\in\R^{n\times p}.
$$
Each row $i$ of $\bx$ and $\df(\bx)$ is associated with agent $i$. We say that $\bx$ is \emph{consensual} if all of its rows are identical, i.e., $x_{(1)}=\cdots= x_{(n)}$. 
The analysis and results of this paper hold for all $p\geq1$. The reader can assume $p=1$ for convenience (so $\bx$ and $\df$ become vectors) without missing any major point.

Finally, for given matrix $A$ and symmetric positive semidefinite matrix $G$,
we define the $G$-matrix norm $\|A\|_G\triangleq\sqrt{\trace(A^\T
GA)}$. The largest singular value of a matrix $A$ is denoted as
$\sigma_{\max}(A)$. The largest and smallest eigenvalues of a
symmetric matrix $B$ are denoted as $\lambda_{\max}(B)$ and
$\lambda_{\min}(B)$, respectively. The smallest \emph{nonzero} eigenvalue
of a symmetric positive semidefinite matrix $B\not=\mathbf{0} $ is denoted as
$\tilde{\lambda}_{\min}(B)$, which is strictly positive. For a matrix $A\in\R^{m\times n}$,
 $\nul{A}\triangleq\{x\in\R^n\big|Ax=0\}$ is the null space of
 $A$ and $\spa{A}\triangleq \{y\in\R^m\big|y=Ax,\forall
x\in\R^n\}$ is the linear span of all the columns of $A$.

\subsection{Summary of Contributions}

This paper introduces a novel gradient-based decentralized
algorithm EXTRA, establishes its convergence conditions and rates, and
presents  numerical results in comparison to
decentralized gradient descent. EXTRA can use a fixed step size independent of the network size
and quickly converges to the solution to \eqref{eq:F}. {It has a rate of convergence $O\left(\frac{1}{k}\right)$
in terms of   best running violation to the first-order optimality condition when $\bar{f}$ is Lipschitz differentiable, and has a
linear rate of convergence if $\bar{f}$ is also (restricted) strongly
convex.} Numerical simulations verify the theoretical results
and demonstrate its competitive performance.

\subsection{Paper Organization}

The rest of this paper is organized as follows. Section
\ref{sec:algo} {develops and interprets EXTRA}. Section
\ref{sec:conv} presents its convergence results. Then, Section
\ref{sc:num} presents three sets of numerical results. Finally,
Section \ref{sc:concl} concludes this paper.

\section{Algorithm Development}\label{sec:algo}

This section derives the proposed algorithm EXTRA. We start by
briefly reviewing \emph{decentralized gradient descent} (DGD) and
discussing the dilemma that  DGD  converges slowly to an exact
solution  when it uses a sequence of diminishing step sizes, yet
it converges faster using a fixed step size but stalls at an
inaccurate solution. We then obtain the update formula of EXTRA by
taking the difference of two formulas of the DGD update. Provided
that the sequence generated by the new update formula  with a
fixed step size converges to a point, we argue that the point is
consensual and optimal. Finally, we briefly discuss the choice of
mixing matrices in EXTRA. Formal convergence results and proofs
are left to Section \ref{sec:conv}.

\subsection{Review of Decentralized Gradient Descent and Its Limitation}\label{sec:dgd}

DGD carries out the following iteration
\begin{equation} \label{eq:DGD}
\begin{array}{cl}
x_{(i)}^{k+1} = \sum\limits_{j=1}^n w_{ij} x_{(j)}^{k} - \alpha^k
\nabla f_i(x_{(i)}^{k}),\quad \mbox{for agent}~i=1,\ldots,n.
\end{array}
\end{equation}
Recall that $x_{(i)}^{k} \in \R^p$ is the local copy of $x$ held
by agent $i$ at iteration $k$, $W=[w_{ij}]\in\R^{n\times n}$ is a symmetric mixing matrix satisfying
$\nul{I-W}=\spa{\mathbf{1}}$ and
$\sigma_{\max}(W-\frac{1}{n}\mathbf{1}\mathbf{1}^\T)<1$, and
$\alpha^k>0$ is a step size for iteration $k$. If two agents $i$
and $j$ are neither neighbors nor identical, then $w_{ij}=0$. This
way, the computation of \eqref{eq:DGD} involves only local and
neighbor information, and hence the iteration is decentralized.

Following our notation, we rewrite \eqref{eq:DGD} for all the agents together as
\begin{equation} \label{eq:DGD_matrix}
\begin{array}{cl}
\bx^{k+1} = W\bx^{k} - \alpha^k\df(\bx^{k}).
\end{array}
\end{equation}
With a fixed step size $\alpha^k\equiv\alpha$, DGD has
\emph{inexact convergence}. For each agent $i$, $x_{(i)}^k$
converges to a point in the $O(\alpha)$-neighborhood of a solution
to \eqref{eq:F}, and these points for different agents can be
different. On the other hand, properly reducing $\alpha^k$ enables
\emph{exact convergence}, namely, that each $x^k_{(i)}$ converges
to the same exact solution. However,  reducing $\alpha^k$ causes
slower convergence, both in theory and in practice.

Paper \cite{Kun2014} assumes that $\nabla f_i$'s are  Lipschitz
continuous, and studies DGD with a constant
$\alpha^{k}\equiv\alpha$. Before the iterates reach the
$O(\alpha)$-neighborhood,  the objective value reduces at the rate
$O\left(\frac{1}{k}\right)$, and this rate improves to linear if
$f_i$'s are also (restricted) strongly convex. In comparison,
paper \cite{Jakovetic2014} studies DGD with diminishing
$\alpha^k=\frac{1}{k^{1/3}}$ and assumes that $\nabla f_i$'s are
Lipschitz continuous and bounded. The objective convergence rate
slows down to $O\left(\frac{1}{k^{2/3}}\right)$. Paper
\cite{Chen2012} studies DGD with diminishing
$\alpha^k=\frac{1}{k^{1/2}}$ and assumes that $f_i$'s are
Lipschitz continuous; a slower rate
$O\left(\frac{\ln{k}}{\sqrt{k}}\right)$ is proved. A simple
example of decentralized least squares in Section \ref{sec:DLS}
gives a rough comparison of these three schemes (and how they
compare to the proposed algorithm).

To see the cause of \emph{inexact convergence} with a
\emph{fixed step size},  let $\bx^{\infty}$ be the limit of
$\bx^k$ (assuming the step size is small enough to ensure convergence).
Taking the limit over $k$ on both sides of iteration
\eqref{eq:DGD_matrix} gives us
$$
\bx^{\infty} = W\bx^{\infty} - \alpha\df(\bx^{\infty}).
$$
When $\alpha$ is fixed and nonzero, assuming the consensus of $\bx^{\infty}$ (namely, it has identical rows $x^{\infty}_{(i)}$) will mean   $\bx^{\infty} = W\bx^{\infty}$, as a result of
$W\mathbf{1}=\mathbf{1}$, and thus
$\df(\bx^{\infty})=\mathbf{0}$, which is equivalent to $\nabla
f_i(x^{\infty}_{(i)})=0,~\forall i$, i.e., the same point  $x^{\infty}_{(i)}$ simultaneously minimizes  $f_i$ for all
agents $i$. This is impossible in general and is different from our objective to find a point that minimizes   $\sum_{i=1}^n
f_i$. 

\subsection{Development of EXTRA}

The next proposition  provides simple conditions  for the consensus and optimality for problem \eqref{eq:F}.

\begin{proposition}\label{conopt} Assume $\nul{I-W}=\spa{\mathbf{1}}$. If
\begin{equation}\label{def:sol}
  \bx^*\triangleq\left(
     \begin{array}{ccc}
       \textrm{---}& x_{(1)}^{*\T} & \textrm{---} \\
       \textrm{---}& x_{(2)}^{*\T} & \textrm{---} \\
       &\vdots& \\
       \textrm{---}& x_{(n)}^{*\T} & \textrm{---} \\
     \end{array}
   \right)
\end{equation}
satisfies conditions:
\begin{enumerate}
\item $\bx^*=W\bx^*$ (consensus),
\item $\one^\T\df(\bx^*)=0$ (optimality),
\end{enumerate}
then $x^*=x_{(i)}^*$, for any $i$, is a solution to the consensus
optimization problem \eqref{eq:F}.
\end{proposition}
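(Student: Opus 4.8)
The plan is to unpack the two hypotheses into pointwise statements and then invoke convexity. First I would extract consensus from condition~1. Writing $\bx^*=W\bx^*$ as $(I-W)\bx^*=\mathbf{0}$, I observe that each of the $p$ columns of $\bx^*$ lies in $\nul{I-W}$. By the standing assumption $\nul{I-W}=\spa{\one}$, every column is a scalar multiple of $\one$, which forces all the rows of $\bx^*$ to coincide: $x_{(1)}^*=\cdots=x_{(n)}^*$. Denote this common row (as a column vector in $\R^p$) by $x^*$, so that $x^*=x_{(i)}^*$ for every $i$ is well defined.

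Next I would feed this into condition~2. Because all $x_{(i)}^*$ equal $x^*$, row $i$ of $\df(\bx^*)$ is $\nabla^\T f_i(x^*)$, and the optimality condition $\one^\T\df(\bx^*)=0$ reads $\sum_{i=1}^n \nabla^\T f_i(x^*)=0$, i.e.\ $\sum_{i=1}^n \nabla f_i(x^*)=\mathbf{0}$. Dividing by $n$ and recognizing $\nabla\bar{f}(x)=\frac{1}{n}\sum_{i=1}^n \nabla f_i(x)$, this is exactly $\nabla\bar{f}(x^*)=\mathbf{0}$.

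Finally I would close the argument with convexity. Each $f_i$ is convex and continuously differentiable, so $\bar{f}$ is convex and differentiable, and for such a function a stationary point is a global minimizer. Hence $\nabla\bar{f}(x^*)=\mathbf{0}$ implies that $x^*$ solves problem~\eqref{eq:F}, as claimed.

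There is no genuine obstacle here; the only point requiring mild care is the columnwise application of the null-space hypothesis when $p>1$, since $\bx^*$ is a matrix rather than a vector. The rest is a direct substitution followed by the elementary fact that first-order stationarity is sufficient for optimality in the convex differentiable setting.
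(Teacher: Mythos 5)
Your proposal is correct and follows essentially the same route as the paper's own (very terse) proof: use the null-space assumption to get consensus from condition~1, rewrite condition~2 as $\sum_{i=1}^n \nabla f_i(x^*)=\mathbf{0}$, and conclude by first-order sufficiency for convex functions. You merely spell out two steps the paper leaves implicit --- the columnwise application of $\nul{I-W}=\spa{\one}$ for $p>1$ and the explicit appeal to convexity --- which is fine but not a different argument.
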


\begin{proof}
Since $\nul{I-W}=\spa{\mathbf{1}}$, $\bx$ is consensual if and
only if condition 1 holds, i.e., $\bx^*=W\bx^*$. Since $\bx^*$
is consensual, we have $\one^\T\df(\bx^*)=\sum_{i=1}^n\nabla f_i(x^*)$, so  condition 2 means optimality. \hfill
\end{proof}

Next, we construct the update formula of EXTRA, following which the iterate sequence will converge to a point satisfying the two conditions in Proposition \ref{conopt}.

\highlight{{Consider the DGD update \eqref{eq:DGD_matrix} written at iterations $k+1$ and $k$  as follows
\begin{align}
\bx^{k+2} &= W\bx^{k+1} - \alpha\df(\bx^{k+1}),\label{dgdk1}\\
\bx^{k+1} &= \BW\bx^{k} - \alpha\df(\bx^{k}),\label{dgdk}
\end{align}
where the former uses the mixing matrix $W$ and the latter uses
$$\BW=\frac{I+W}{2}.$$ {The choice of $\BW$ will be generalized
later.} The update formula of EXTRA is simply their difference,
subtracting \eqref{dgdk} from \eqref{dgdk1}:}
\begin{equation}\label{cw}
\bx^{k+2}-\bx^{k+1} = W\bx^{k+1}-\BW\bx^{k} -
\alpha\df(\bx^{k+1})+\alpha\df(\bx^{k}).
\end{equation}
Given $\bx^k$ and $\bx^{k+1}$, the next iterate  $\bx^{k+2}$ is generated by \eqref{cw}.


Let us assume that $\{\bx^k\}$ converges for now and let $\bx^*
=\lim_{k\rightarrow\infty} \bx^k$. Let us also assume that $\df$
is continuous. We first establish condition 1 of Proposition
\ref{conopt}.  Taking $k\rightarrow\infty$ in \eqref{cw} gives us
\begin{equation}\label{cw1}
\bx^* - \bx^* =(W-\BW)\bx^*-\alpha\df(\bx^*)+\alpha\df(\bx^*),
\end{equation}
from which  it follows that
\begin{equation}\label{cw2}
W\bx^*-\bx^*=2(W-\BW)\bx^*=\mathbf{0}.
\end{equation}
{Therefore, $\bx^*$  is consensual.}

{Provided that $\one^\T(W-\BW)=0$, we show that
$\bx^*$ also satisfies condition 2 of Proposition
\ref{conopt}.} To see this, adding the first update $\bx^1=W\bx^0-\alpha\df(x^0)$
to the subsequent updates following the formulas of
$(\bx^{2}-\bx^{1}),(\bx^{3}-\bx^{2}),\ldots,(\bx^{k+2}-\bx^{k+1})$
given by \eqref{cw} and then applying telescopic cancellation,  we obtain
\begin{equation}\label{cw3}
\bx^{k+2}=\BW\bx^{k+1}-\alpha\df(\bx^{k+1})+\sum\limits_{t=0}^{k+1}(W-\BW)\bx^t,
\end{equation}
or equivalently,
\begin{equation}\label{cw3_2}
\bx^{k+2}=\W\bx^{k+1}-\alpha\df(\bx^{k+1})+\sum\limits_{t=0}^{k}(W-\BW)\bx^t.
\end{equation}
Taking $k\rightarrow\infty$,  from
$\bx^*=\lim_{k\rightarrow\infty} \bx^k$ and
$\bx^*=\BW\bx^*=\W\bx^*$, it follows that
\begin{equation}\label{cw4}
\alpha\df(\bx^*)=\sum\limits_{t=0}^\infty(W-\BW)\bx^t.
\end{equation}
Left-multiplying $\one^\T$ on both sides of \eqref{cw4}, in light of $\one^\T(W-\BW)=0$, we obtain the condition 2 of Proposition
\ref{conopt}:
\begin{equation}\label{cw5}
\one^\T\df(\bx^*)=0.
\end{equation}

{To summarize, provided that $\nul{I-W}=\spa{\mathbf{1}}$, $\BW=\frac{I+W}{2}$, $\one^\T(W-\BW)=0$, and the continuity of $\df$, if a sequence following EXTRA \eqref{cw} converges to a point $\bx^*$, then by Proposition
\ref{conopt}, $\bx^*$ is  consensual and any of its identical row vectors solves problem \eqref{eq:F}. 
}
}
\subsection{The Algorithm EXTRA and its Assumptions}

We present  EXTRA --- an
\uline{ex}act firs\uline{t}-orde\uline{r} \uline{a}lgorithm for
decentralized consensus optimization --- in Algorithm 1.

\begin{center}
  {\textbf{Algorithm 1: EXTRA}}

  \smallskip
    \begin{tabular}{l}
    \hline
    \emph{  } Choose $\alpha>0$ and mixing matrices $W\in \R^{n \times n}$ and $\BW\in \R^{n \times n}$;\\
    \emph{  } Pick any $\bx^0\in \R^{n \times p}$;\\
    \emph{1.} $\bx^1\gets W\bx^0-\alpha\df(\bx^0)$;\\
    \emph{2.} \textbf{for} $k=0,1,\cdots$ \textbf{do} \\
    \qquad$\bx^{k+2}\gets (I+\W)\bx^{k+1}-\BW\bx^k-\alpha\left[\df(\bx^{k+1})-\df(\bx^k)\right]$;\\
    \emph{~ } \textbf{end for}\\
    \hline
    \end{tabular}
\end{center}
\vspace{10pt}
Breaking to the individual agents, Step 1 of EXTRA performs updates
$$
x_{(i)}^{1} = \sum\limits_{j=1}^n w_{ij} x_{(j)}^{0} - \alpha \nabla f_i(x_{(i)}^{0}),\quad i=1,\ldots, n,
$$
and Step 2 at each iteration $k$ performs updates
$$
x_{(i)}^{k+2}= x_{(i)}^{k+1}+\sum\limits_{j=1}^nw_{ij}x_{(j)}^{k+1}-\sum\limits_{j=1}^n\bw_{ij}x_{(j)}^k-\alpha\left[\nabla f_i(x_{(i)}^{k+1})-\nabla f_i(x_{(i)}^k)\right],\quad i=1,\ldots, n.
$$
Each agent computes $\nabla f_i(x_{(i)}^k)$ once for each $k$ and uses it twice for $x_{(i)}^{k+1}$ and $x_{(i)}^{k+2}$. For our recommended choice of $\tilde W=(W+I)/2$, each agent computes $\sum_{j=1}^n w_{ij}x_{(j)}^{k}$ once as well.

Here we formally give the assumptions on the mixing matrices $W$ and $\tilde W$ for  EXTRA. All of them will be used in the convergence analysis in the next section.
\begin{assumption}[Mixing matrix]\label{ass:matrices} Consider a \emph{connected network}
$\mathcal{G}=\{\mathcal{V}, \mathcal{E}\}$ consisting of a set of
agents $\mathcal{V}=\{1,2,\cdots,n\}$ and a set of undirected
edges $\mathcal{E}$. 
The mixing matrices $\W=[w_{ij}]\in\R^{n\times n}$ and
$\BW=[\bw_{ij}]\in\R^{n\times n}$ satisfy
\begin{enumerate}
\item (Decentralized property) If $i\not=j$ and $(i,j)\not\in\mathcal{E}$, then
$w_{ij}=\bw_{ij}=0$.
\item (Symmetry) $\W = \W^\T$, $\BW = \BW^\T$.
\item (Null space property) $\nul{\W-\BW}=\spa{\mathbf{1}}$, $\nul{I-\BW}\supseteq\spa{\mathbf{1}}$.
\item (Spectral
property)  $\BW \succ 0$ and
$\frac{I+\W}{2}\succcurlyeq\BW\succcurlyeq\W$.
\end{enumerate}
\end{assumption}

We claim that Parts 2--4 of Assumption \ref{ass:matrices}
imply $\nul{I-W}=\spa{\mathbf{1}}$ and the eigenvalues of $\W$ lie in
$(-1,1]$, which are commonly assumed for DGD. Therefore, the additional assumptions are merely on $\tilde W$. In fact, EXTRA can use the same $W$ used in DGD and simply take $\BW=\frac{I+W}{2}$, which satisfies Part 4. 
It is also worth noting that the recent work push-DGD \cite{Nedic2013} relaxes the symmetry condition, yet such relaxation for EXTRA is not trivial and is our future work.

\highlight{\begin{proposition} Parts 2--4 of Assumption \ref{ass:matrices} imply $\nul{I-W}=\spa{\mathbf{1}}$ and that the eigenvalues of $\W$ lie in
$(-1,1]$.
\end{proposition}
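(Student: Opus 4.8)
The plan is to split the claim into three parts: the two inclusions that together establish $\nul{I-W}=\spa{\one}$, followed by the spectral bounds on $\W$. Throughout I would work only with the Loewner orderings and null-space identities listed in Assumption~\ref{ass:matrices}, relying on the symmetry in Part~2 so that every matrix difference involved is symmetric and the orderings translate cleanly into statements about eigenvalues and quadratic forms.

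First I would dispose of the easy inclusion $\spa{\one}\subseteq\nul{I-W}$. From Part~3, the identity $\nul{\W-\BW}=\spa{\one}$ gives $(\W-\BW)\one=\mathbf{0}$, i.e.\ $\W\one=\BW\one$, while $\nul{I-\BW}\supseteq\spa{\one}$ gives $\BW\one=\one$. Chaining the two yields $\W\one=\one$, so $\one\in\nul{I-\W}$.

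The real work is the reverse inclusion $\nul{I-\W}\subseteq\spa{\one}$, which I would obtain by a squeezing argument on quadratic forms. Fix any $x$ with $\W x=x$. Then $x^\T\W x=\|x\|^2$, and since $x^\T\frac{I+\W}{2}x=\frac12(\|x\|^2+x^\T\W x)=\|x\|^2$ as well, the chain $\W\preccurlyeq\BW\preccurlyeq\frac{I+\W}{2}$ from Part~4 forces
\begin{equation*}
\|x\|^2=x^\T\W x\le x^\T\BW x\le x^\T\tfrac{I+\W}{2}x=\|x\|^2,
\end{equation*}
hence $x^\T(\BW-\W)x=0$. Because $\BW-\W\succcurlyeq0$ by Part~4, the standard fact that a positive semidefinite matrix annihilates every vector on which its quadratic form vanishes gives $(\BW-\W)x=\mathbf{0}$, so $x\in\nul{\W-\BW}=\spa{\one}$ by Part~3. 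This proves $\nul{I-\W}=\spa{\one}$.

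Finally, for the spectrum I would read both bounds straight off Part~4. Adding the semidefinite inequalities $\frac{I+\W}{2}-\BW\succcurlyeq0$ and $\BW-\W\succcurlyeq0$ gives $\frac{I-\W}{2}\succcurlyeq0$, so $\W\preccurlyeq I$ and every eigenvalue of $\W$ is at most $1$. For the lower bound, $\frac{I+\W}{2}\succcurlyeq\BW\succ0$ yields $\frac{I+\W}{2}\succ0$, hence $I+\W\succ0$ and every eigenvalue exceeds $-1$; together the eigenvalues lie in $(-1,1]$. The one delicate point is the squeezing step: the crux is recognizing that an eigenvector of $\W$ for eigenvalue $1$ makes the two outer quadratic forms coincide, which pins the positive semidefinite gap $\BW-\W$ to zero on it and lets Part~3 close the argument; everything else is a direct reading of the ordering relations.
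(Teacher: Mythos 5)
Your proof is correct and takes essentially the same route as the paper: both squeeze the quadratic form of an eigenvector of $\W$ for eigenvalue $1$ between $\W$ and $\frac{I+\W}{2}$ using Part~4, force $x^\T(\BW-\W)x=0$, kill $(\BW-\W)x$ by positive semidefiniteness, and invoke Part~3, with the spectral bounds read directly off the Loewner orderings. The only (favorable) differences are cosmetic: you explicitly verify the easy inclusion $\spa{\one}\subseteq\nul{I-\W}$, which the paper leaves implicit, and you apply the PSD-annihilation fact directly to $\BW-\W$ rather than detouring through $(\BW-I)\bv=0$ as the paper does.
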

\begin{proof}
From part 4, we have $\frac{I+\W}{2}\succcurlyeq\BW\succ 0$ and
thus $W\succ - I$ and {$\lambda_{\min}(W)>-1$}. Also from
part 4, we have $\frac{I+\W}{2}\succcurlyeq\W$ and thus $I\succeq
W$, which means $\lambda_{\max}(W)\le 1$. Hence, all eigenvalues
of $\W$ (and those of $\BW$) lie in $(-1,1]$.

Now, we show
$\nul{I-\W}=\spa{\mathbf{1}}$. Consider a \emph{nonzero }vector $\bv\in\nul{I-\W}$, which satisfies $(I-\W)\bv=0$ and thus $\bv^T(I-W)\bv=0$ and  $\bv^T\bv=\bv^T\W\bv$. From $\frac{I+\W}{2}\succcurlyeq\BW$ (part 4), we get $\bv^T\bv=\bv^T(\frac{I+\W}{2})\bv \ge \bv^T\BW\bv$, while from $\BW\succcurlyeq\W$ (part 4) we also get
$\bv^T\BW\bv\ge \bv^T\W\bv = \bv^T\bv$. Therefore, we have $\bv^T\BW\bv=\bv^T\bv$ or equivalently $(\BW-I)\bv=0$, adding which to $(I-\W)\bv=0$ yields $(\BW-\W)\bv=0$. In light of $\nul{\W-\BW}=\spa{\mathbf{1}}$ (part 3), we must have $\bv \in \spa{\mathbf{1}}$ and thus $\nul{I-\W}=\spa{\mathbf{1}}.$
\hfill \end{proof}
}

\subsection{Mixing Matrices}\label{sec:matrices}
In EXTRA, the mixing matrices $W$ and $\BW$
diffuse information throughout the network.

The role of $W$ is the similar as that in DGD
\cite{Chen2012,Tsitsiklis1984,Kun2014} and average consensus
\cite{Xiao2004}. It has a few common choices, which can
significantly affect performance.
\begin{enumerate}
  \item[(i)] Symmetric doubly stochastic matrix \cite{Chen2012,Tsitsiklis1984,Kun2014}: $W=W^\T$, $W\one=\one$, and $w_{ij}\geq0$. Special cases of such matrices include parts (ii) and (iii) below.
  \item[(ii)] Laplacian-based constant edge weight matrix \cite{Sayed2012,Xiao2004},
      $$
      W=I-\frac{L}{\tau},
      $$
      where $L$ is the Laplacian matrix of the graph $\mathcal{G}$ and $\tau>\frac{1}{2}\lambda_{\max}(L)$ is a scaling parameter. Denote $\degree(i)$ as the degree of agent $i$. When $\lambda_{\max}(L)$ is not available, $\tau=\max_{i\in\mathcal{V}}\{\degree(i)\}+\epsilon$ for some small  $\epsilon>0$, say $\epsilon=1$, can be used.
  \item[(iii)] Metropolis constant edge weight matrix \cite{Boyd2004,Xiao2007},
      $$
      w_{ij}=\left\{
      \begin{array}{cl}
      \frac{1}{\max\{\degree(i),\degree(j)\}+\epsilon},&\text{if } (i,j)\in\mathcal{E}, \\
      0,&\text{if } (i,j)\notin\mathcal{E}\text{ and } i\neq j, \\
      1-\sum\limits_{k\in\mathcal{V}} w_{ik},&\text{if } i=j,\\
      \end{array}
      \right.
      $$
      for some small positive $\epsilon>0$.
  \item[(iv)] Symmetric fastest distributed linear averaging (FDLA) matrix. It is a symmetric $W$ that achieves fastest information diffusion and can be obtained by  a semidefinite program \cite{Xiao2004}.
\end{enumerate}

It is worth noting that the optimal choice for average consensus, FDLA, no longer appears optimal in  decentralized consensus optimization, which is more general.

When $W$ is chosen following any strategy above,  $\BW=\frac{I+W}{2}$ is found to be very efficient.

\subsection{EXTRA as Corrected DGD}

We rewrite \eqref{cw3_2} as
\begin{equation} \label{eq:EXTRA_interpretation}
\begin{array}{cl}
\underbrace{\bx^{k+1}=W\bx^{k}-\alpha\df(\bx^{k})}_{\text{DGD}}+\underbrace{\sum\limits_{t=0}^{k-1}(W-\BW)\bx^t}_{\text{correction}},\quad
k=0,1,\cdots.
\end{array}
\end{equation}
An EXTRA update is, {therefore}, a DGD update with a cumulative
correction term. In subsection \ref{sec:dgd}, we have argued that
the DGD update cannot reach consensus asymptotically unless
$\alpha$ asymptotically vanishes. Since $\alpha\df(\bx^k)$ with a
fixed $\alpha>0$ cannot vanish in general, it must be corrected,
or otherwise $\bx^{k+1}-W\bx^k$ does not vanish, preventing
$\bx^k$ from being asymptotically consensual. Provided that
\eqref{eq:EXTRA_interpretation} converges, the role of the
cumulative term $\sum_{t=0}^{k-1}(W-\BW)\bx^t$ is to
\emph{neutralize} $-\alpha\df(\bx^{k})$ in
$(\spa{\mathbf{1}})^\perp$, the subspace  orthogonal to
$\mathbf{1}$. If a vector $\mathbf{v}$ obeys
$\mathbf{v}^\T(W-\BW)=0$, then the convergence {of}
\eqref{eq:EXTRA_interpretation} means the vanishing of
$\mathbf{v}^\T\df(\bx^k)$ in the limit. We need
$\one^\T\df(\bx^k)=0$ for consensus optimality. The correction
term in \eqref{eq:EXTRA_interpretation} is the simplest that we
could find so far. In particular, the summation is necessary since
each individual term  $(W-\BW)\bx^t$ is asymptotically vanishing.
The terms must work cumulatively.

\section{Convergence Analysis}\label{sec:conv}

To establish convergence of EXTRA, this paper makes two additional
but common assumptions as follows. Unless otherwise stated, the
results in this section are given under Assumptions
\ref{ass:matrices}--\ref{ass:solotion_existence}.

\begin{assumption}\label{ass:functions} \textbf{(Convex objective with Lipschitz continuous gradient)}
Objective functions $f_i$ are proper closed convex and Lipschitz
differentiable:
$$\|\nabla f_i(x_a) - \nabla f_i(x_b)\|_2 \leq
L_{f_i} \|x_a - x_b\|_2,\quad \forall x_a, x_b \in \R^p,$$ where
$L_{f_i} \ge 0$ are constant.
\end{assumption}

Following Assumption \ref{ass:functions}, function
$\f(\bx)=\sum_{i=1}^{n} f_i(x_{(i)})$ is proper closed convex, and
$\df$ is Lipschitz continuous
$$
\|\df(\bx_a)-\df(\bx_b)\|_\Fro \leq
\Lf\|\bx_a-\bx_b\|_\Fro,\quad\forall\bx_a, \bx_b \in \R^{n\times
p},
$$
with constant $\Lf = \max_i \{L_{f_i}\}.$

\begin{assumption}\label{ass:solotion_existence}
\textbf{(Solution existence)} Problem \eqref{eq:F} has a nonempty
set of optimal solutions: $\mathcal{X}^*\neq\emptyset$.
\end{assumption}

\subsection{Preliminaries}

\highlight{We first state a lemma that gives the first-order
optimality conditions of \eqref{eq:F}.}

\begin{lemma}[First-order optimality conditions] \label{lemma:opt}
Given mixing matrices $\W$ and $\BW$, define $U = (\BW-W)^{1/2}$
by letting  $U\triangleq V S^{1/2} V^\T\in\R^{n \times n}$ where
$VSV^\T=\BW - \W$ is the economical-form singular value
decomposition. Then, under  Assumptions
\ref{ass:matrices}--\ref{ass:solotion_existence}, $\bx^*$ is
consensual and $x_{(1)}^* \equiv x_{(2)}^* \equiv \cdots \equiv
x_{(n)}^*$ is optimal to problem \eqref{eq:F} if and only if there
exists $\bq^*=U\mathbf{p}$ for some $\mathbf{p} \in \R^{n\times
p}$ such that
\begin{numcases}{}
  U \bq^* + \alpha\df(\bx^*) = \mathbf{0},\label{eq:line1}\\
  U \bx^* = \mathbf{0}.\label{eq:line2}
\end{numcases}
\end{lemma}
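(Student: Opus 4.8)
The plan is to first record the structural properties of the symmetric matrix $U$ that make the two displayed conditions meaningful, then to split the claimed equivalence into a \emph{consensus} part (governed by $\nul{U}$) and an \emph{optimality} part (governed by $\spa{U}$), handling each with the linear algebra of $U$ applied columnwise to the $n\times p$ iterate.

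First I would establish that $U=(\BW-W)^{1/2}$ is symmetric positive semidefinite with $U^2=\BW-W$, and that $\nul{U}=\nul{\BW-W}=\spa{\one}$ while $\spa{U}=(\nul{U})^\perp=(\spa{\one})^\perp$. The null-space identity follows from $\nul{U}=\nul{U^2}=\nul{\BW-W}$ (valid since $U$ is symmetric) together with Part 3 of Assumption \ref{ass:matrices} (recall $\nul{\W-\BW}=\spa{\one}$, and $\nul{\BW-W}=\nul{\W-\BW}$), while the range identity is the standard range-equals-orthogonal-complement-of-kernel fact for symmetric matrices, combined with $\spa{U}=\spa{U^2}$. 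Part 4 ($\BW\succcurlyeq\W$) is what guarantees $U$ is well defined and PSD in the first place. In particular $U\one=\mathbf{0}$, hence $\one^\T U=\mathbf{0}$, a fact I will use repeatedly. An immediate consequence is that \eqref{eq:line2}, $U\bx^*=\mathbf{0}$, says columnwise that every column of $\bx^*$ lies in $\spa{\one}$, i.e., that $\bx^*$ is consensual; this matches $\bx^*=W\bx^*$, since the earlier proposition gives $\nul{I-W}=\spa{\one}$ and thus $W\one=\one$.

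Next I would reduce optimality to the scalar identity $\one^\T\df(\bx^*)=0$. Assuming $\bx^*$ consensual, $W\bx^*=\bx^*$, so Proposition \ref{conopt} shows that $\one^\T\df(\bx^*)=0$ is sufficient for the common row to solve \eqref{eq:F}; conversely, since $\bar f=\frac{1}{n}\sum_i f_i$ is convex and differentiable, optimality of the common value $x^*$ is equivalent to $\sum_i\nabla f_i(x^*)=\one^\T\df(\bx^*)=0$. Thus the whole lemma reduces, under $U\bx^*=\mathbf{0}$, to the equivalence
$$
\one^\T\df(\bx^*)=0 \quad\Longleftrightarrow\quad \exists\,\bq^*=U\mathbf{p} \ \text{ with }\ U\bq^*+\alpha\df(\bx^*)=\mathbf{0}.
$$

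Finally I would prove this equivalence using the range/kernel structure of $U$. For $(\Leftarrow)$, left-multiply $U\bq^*+\alpha\df(\bx^*)=\mathbf{0}$ by $\one^\T$ and invoke $\one^\T U=\mathbf{0}$ and $\alpha>0$ to conclude $\one^\T\df(\bx^*)=0$; note this direction needs only the equation, not the representation $\bq^*=U\mathbf{p}$. For $(\Rightarrow)$, the hypothesis $\one^\T\df(\bx^*)=0$ says each column of $\df(\bx^*)$ is orthogonal to $\one$, so $-\alpha\df(\bx^*)$ lies columnwise in $\spa{U}$; since $U$ maps $\spa{U}=(\nul{U})^\perp$ bijectively onto itself, there is a unique $\bq^*\in\spa{U}$ with $U\bq^*=-\alpha\df(\bx^*)$, and membership $\bq^*\in\spa{U}$ is exactly the representation $\bq^*=U\mathbf{p}$. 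I expect the only genuine subtlety to be this last existence step: one must check that the optimality condition places $\df(\bx^*)$ precisely in $\spa{U}$ and that $U$ is invertible there, which is where Part 3 (pinning down $\nul{U}=\spa{\one}$) and Part 4 (making $U$ a well-defined PSD square root) of Assumption \ref{ass:matrices} are both indispensable; the remainder is routine linear algebra carried out columnwise.
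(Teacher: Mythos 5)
Your proof is correct and follows essentially the same route as the paper's: identify $\nul{U}=\spa{\one}$ to handle consensus via Proposition \ref{conopt}, reduce optimality to $\one^\T\df(\bx^*)=0$, left-multiply \eqref{eq:line1} by $\one^\T$ for one direction, and use $\spa{U}=(\spa{\one})^\perp$ to solve for $\bq^*$ in the other. The only cosmetic difference is how the representation $\bq^*=U\mathbf{p}$ is secured: you invert $U$ on its range, while the paper picks any solution $\bq$ of $U\bq=-\alpha\df(\bx^*)$ and projects it onto $\spa{U}$ --- these are equivalent.
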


\begin{proof}
According to Assumption \ref{ass:matrices} and the definition of
$U$, we have
$$
\nul{U}=\nul{V^\T}=\nul{\BW - \W}=\spa{\one}.
$$
Hence from Proposition \ref{conopt}, condition 1, $\bx^*$ is
consensual if and only if \eqref{eq:line2} holds.

Next, following Proposition \ref{conopt}, condition 2, $\bx$ is
optimal if and only if $\one^\T\df(\bx^*)=0$. Since $U$ is
symmetric and $U^\T\one=0$, \eqref{eq:line1} gives
$\one^\T\df(\bx^*)=0$. Conversely, if $\one^\T\df(\bx^*)=0$, then
$\df(\bx^*)\in\spa{U}$ follows from
$\nul{U}=\left(\spa{\one}\right)^\perp$ and thus
$\alpha\df(\bx^*)=-U\bq$ for some $\bq$. Let
$\bq^*=\mathrm{Proj}_U\bq$. Then, $U\bq^*=U\bq$ and
\eqref{eq:line1} holds.\hfill
\end{proof}

Let $\bx^*$ and $\bq^*$ satisfy the optimality conditions
\eqref{eq:line1} and \eqref{eq:line2}. Introduce auxiliary
sequence $$\bq^k = \sum_{t=0}^k U\bx^t$$ and for each  $k$,
\begin{equation} \label{eq:wg}
\bz^k=\left(
             \begin{array}{c}
               \bq^k \\
               \bx^k \\
             \end{array}
           \right),\quad
\bz^*=\left(
             \begin{array}{c}
               \bq^* \\
               \bx^* \\
             \end{array}
           \right),\quad
G=\left(
             \begin{array}{cc}
               I &  \mathbf{0} \\
                \mathbf{0} & \BW \\
             \end{array}
           \right).
\end{equation}
The next lemma establishes the relations among $\bx^k$, $\bq^k$,
$\bx^*$, and $\bq^*$.

\begin{lemma}\label{lemma:recursion} In EXTRA, the quadruple sequence  $\{\bx^k,\bq^k,\bx^*,\bq^*\}$ obeys
\begin{equation} \label{eq:recursion-4}
\begin{array}{rcl}
& &(I + \W - 2\BW) (\bx^{k+1} - \bx^*) + \BW (\bx^{k+1} - \bx^k)\\
&=&- U (\bq^{k+1} - \bq^*) - \alpha [\df(\bx^k) - \df(\bx^*)],
\end{array}
\end{equation}
for any $k=0,1,\cdots$.
\end{lemma}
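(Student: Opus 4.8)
The plan is to verify the recursion \eqref{eq:recursion-4} directly from the EXTRA update formula, using the definitions of $\bq^k$, $U$, and the optimality conditions, by algebraic manipulation rather than any deeper machinery. The key is to recognize that the claimed identity is essentially a rewriting of the Step 2 update in Algorithm 1, shifted by the optimal point $(\bx^*,\bq^*)$.

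Let me sketch the steps.

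First I would write down the EXTRA iteration from Step 2 of Algorithm 1,
$$
\bx^{k+2} = (I+\W)\bx^{k+1} - \BW\bx^k - \alpha\bigl[\df(\bx^{k+1}) - \df(\bx^k)\bigr],
$$
and rearrange it into a telescoped form. The natural move is to use \eqref{cw3_2}, which already expresses the update as
$$
\bx^{k+2} = \W\bx^{k+1} - \alpha\df(\bx^{k+1}) + \sum_{t=0}^{k}(W-\BW)\bx^t.
$$
Since $U^2 = \BW - \W$, I have $\sum_{t=0}^{k}(W-\BW)\bx^t = -U^2\sum_{t=0}^{k}\bx^t$. Now the auxiliary sequence $\bq^k = \sum_{t=0}^{k}U\bx^t$ satisfies $U\bq^k = U^2\sum_{t=0}^{k}\bx^t$, so the correction term is exactly $-U\bq^k$. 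Rewriting the index shift (replacing $k+2$ by $k+1$), I obtain a clean one-step relation
$$
\bx^{k+1} = \W\bx^{k} - \alpha\df(\bx^{k}) - U\bq^{k},
$$
where $\bq^k = \bq^{k-1} + U\bx^k$. I should double-check the base case $k=0$ against Step 1, $\bx^1 = W\bx^0 - \alpha\df(\bx^0)$, which forces the convention $\bq^{-1}=\mathbf{0}$ (equivalently $\bq^0 = U\bx^0$) so that the correction vanishes at the first step.

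Next I would subtract the optimality conditions. From \eqref{eq:line1} we have $\alpha\df(\bx^*) = -U\bq^*$, and from \eqref{eq:line2} we have $U\bx^* = \mathbf{0}$, hence $\W\bx^* = \bx^* - U^2\bx^* = \bx^*$ and likewise $\BW\bx^*=\bx^*$. Subtracting the fixed-point version of the one-step relation from the iterate version, and shifting the index appropriately to match the $k+1$ level in the target identity, I expect all the pieces to assemble into \eqref{eq:recursion-4}: the term $(I+\W-2\BW)(\bx^{k+1}-\bx^*)$ and $\BW(\bx^{k+1}-\bx^k)$ on the left should emerge by regrouping $\W$, $\BW$, and the identity appearing from the index bookkeeping, while the right side $-U(\bq^{k+1}-\bq^*) - \alpha[\df(\bx^k)-\df(\bx^*)]$ is just the correction term and gradient difference after subtraction.

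\textbf{The main obstacle} I anticipate is the index bookkeeping: the target \eqref{eq:recursion-4} involves $\bx^{k+1}$, $\bx^k$, and $\bq^{k+1}$ simultaneously, so I must carefully align which iterate of the one-step relation to use and how the telescoped sum $\bq^{k+1} = \bq^k + U\bx^{k+1}$ interacts with the mixing matrices. In particular, the combination $(I+\W-2\BW)$ on the left is not obvious at first glance and must be produced by collecting the $\BW(\bx^{k+1}-\bx^k)$ shift against the $\W$ and $U^2=\BW-\W$ contributions; getting the coefficients of $\bx^{k+1}$ and $\bx^k$ to match exactly is where an arithmetic slip is most likely. Everything else is routine linear algebra once the fixed-point relations $U\bx^*=\mathbf{0}$ and $\alpha\df(\bx^*)=-U\bq^*$ are substituted.
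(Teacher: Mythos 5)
Your strategy is the same as the paper's proof: telescope the EXTRA updates into a summed form, rewrite the accumulated correction $\sum_t(\W-\BW)\bx^t$ as $-U\bq$ via $U^2=\BW-\W$, and then subtract the stationary relations $U\bx^*=\mathbf{0}$, $U\bq^*+\alpha\df(\bx^*)=\mathbf{0}$, $(I+\W-2\BW)\bx^*=\mathbf{0}$. But the one displayed equation your assembly rests on is wrong, and it is precisely the index slip you flagged as the main risk. Shifting $k+2\mapsto k+1$ in $\bx^{k+2}=\W\bx^{k+1}-\alpha\df(\bx^{k+1})-U\bq^{k}$ must also shift the superscript on $\bq$, giving
\begin{equation*}
\bx^{k+1}=\W\bx^{k}-\alpha\df(\bx^{k})-U\bq^{k-1},
\qquad\text{equivalently}\qquad
\bx^{k+1}=\BW\bx^{k}-\alpha\df(\bx^{k})-U\bq^{k},
\end{equation*}
the two forms agreeing because $U\bq^{k}=U\bq^{k-1}+U^2\bx^{k}=U\bq^{k-1}+(\BW-\W)\bx^{k}$; the second form is exactly the paper's intermediate identity \eqref{eq:recursion-2}. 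Your version, $\bx^{k+1}=\W\bx^{k}-\alpha\df(\bx^{k})-U\bq^{k}$, pairs $\W$ with $\bq^{k}$ and is off by $(\BW-\W)\bx^{k}$; as an identity about the EXTRA iterates it would force $(\BW-\W)\bx^{k}=\mathbf{0}$, i.e., consensus at every iteration, which is false in general. Your own base-case check exposes this: at $k=0$ the correction in your formula is $U\bq^{0}=U^2\bx^{0}\neq\mathbf{0}$ in general, so it does not reduce to Step 1; the convention $\bq^{-1}=\mathbf{0}$ rescues only the $\bq^{k-1}$ form.

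The slip is fatal to the assembly as you describe it. After substituting $U(\bq^{k+1}-\bq^*)=U(\bq^{k}-\bq^*)+(\BW-\W)(\bx^{k+1}-\bx^*)$ and the stationary relations, the target \eqref{eq:recursion-4} is \emph{equivalent} to
\begin{equation*}
\bx^{k+1}-\bx^*=\BW(\bx^{k}-\bx^*)-U(\bq^{k}-\bq^*)-\alpha\left[\df(\bx^{k})-\df(\bx^*)\right],
\end{equation*}
with $\BW$ (not $\W$) in front of $\bx^{k}-\bx^*$; starting from your relation instead leaves an unabsorbable residual $(\BW-\W)(\bx^{k}-\bx^*)$. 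A second, smaller slip: $\W\bx^*=\bx^*-U^2\bx^*$ is not an identity, since $I-U^2=I-\BW+\W\neq\W$; the correct argument is that $U\bx^*=\mathbf{0}$ forces $\bx^*$ to be consensual (as $\nul{U}=\spa{\one}$), whence $\W\bx^*=\BW\bx^*=\bx^*$ follows from $\W\one=\BW\one=\one$. With these indices repaired, your plan becomes exactly the paper's proof, which sums iterations $1$ through $k+1$ to reach \eqref{eq:recursion-2} and then subtracts the stationary quantities.
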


\begin{proof}
Similar to how \eqref{cw3} is derived, summing EXTRA iterations
$1$ through $k+1$
\begin{equation*} \label{eq:recursion-1}
\begin{array}{cc}
\bx^1 = \W \bx^0 - \alpha \df(\bx^0), \\
\bx^2 = (I + \W) \bx^1 - \BW\bx^0 - \alpha \df(\bx^1) + \alpha \df(\bx^0), \\
\cdots, \\
\bx^{k+1} = (I + \W) \bx^k - \BW \bx^{k-1} - \alpha \df(\bx^k) + \alpha \df(\bx^{k-1}), \\
\end{array}
\end{equation*}
we get
\begin{equation} \label{eq:recursion-2}
\begin{array}{cl}
\bx^{k+1} = \BW \bx^k - \sum\limits_{t=0}^k (\BW - \W) \bx^t -
\alpha \df(\bx^k).
\end{array}
\end{equation}
Using $\bq^{k+1} = \sum_{t=0}^{k+1} U\bx^t$  and the decomposition
$\BW - \W = U^2$, it follows from \eqref{eq:recursion-2} that
\begin{equation} \label{eq:recursion-3}
\begin{array}{cl}
(I + \W - 2\BW) \bx^{k+1} + \BW(\bx^{k+1} - \bx^k) = -U \bq^{k+1}
- \alpha \df(\bx^k).
\end{array}
\end{equation}
Since $(I+\W-2\BW)\one=0$, we have
\begin{equation} \label{eq:line3}
\begin{array}{cl}
(I+\W-2\BW) \bx^*= \mathbf{0}.
\end{array}
\end{equation}
Subtracting \eqref{eq:line3}  from \eqref{eq:recursion-3} and
adding $ \mathbf{0}=U \bq^* + \alpha \df(\bx^*)$ to
\eqref{eq:recursion-3}, we obtain \eqref{eq:recursion-4}. \hfill
\end{proof}

The convergence analysis is based on the recursion
\eqref{eq:recursion-4}. Below we will show that $\bx^k$ converges
to a solution $\bx^*\in \mathcal{X}^*$ and
$\|\bz^{k+1}-\bz^k\|_{\BW}^2$ converges to 0 at a rate of
$O\left(\frac{1}{k}\right)$  in an ergodic sense. Further assuming
(restricted) strong convexity, we obtain the Q-linear convergence
of $\|\bz^k-\bz^*\|_G^2$ to 0, which implies the R-linear
convergence of $\bx^k$ to $\bx^*$.

\subsection{Convergence and Rate}\label{sec:1_k}

Let us first interpret the step size condition
\begin{equation}\label{alphacond}
\alpha <\frac{2\lambda_{\min}(\BW)}{\Lf},
\end{equation} which is assumed by Theorem \ref{lemma:contractive} below. \highlight{First of all, let $W$ satisfy Assumption
\ref{ass:matrices}. It is easy to ensure $\lambda_{\min}(W)\geq 0$
since otherwise, we can replace $W$  by
$\frac{I+W}{2}$. In light of part 4 of Assumption
\ref{ass:matrices}, if we let $\BW=\frac{I+W}{2}$, then we have
$\lambda_{\min}(\BW)\geq\frac{1}{2}$, which {simplifies the
bound \eqref{alphacond} to}
$$\alpha<\frac{1}{\Lf},$$
which is independent of any network property (size, diameter, etc.). Furthermore, if $L_{f_i}$ ($i=1,\ldots,n$) are in the same order,  the bound $\frac{1}{\Lf}$ has the same order as the bound $1/(\frac{1}{n}\sum_{i=1}^n L_{f_i})$, which is used in the (centralized) gradient descent method.  In other words, a fixed and rather large step size is permitted by EXTRA.} 

{\begin{theorem}\label{lemma:contractive} Under
Assumptions  \ref{ass:matrices}--\ref{ass:solotion_existence}, if
$\alpha $ satisfies $0<\alpha<\frac{2\lambda_{\min}(\BW)}{\Lf}$,
then
\begin{equation}\label{eq:contractive}
     \|\bz^k-\bz^*\|_{G}^2-\|\bz^{k+1}-\bz^*\|_{G}^2\geq\zeta\|\bz^k-\bz^{k+1}\|_{G}^2,\quad k=0,1,\ldots,
\end{equation}
where $\zeta=1-\frac{\alpha\Lf}{2\lambda_{\min}(\BW)}$.
\highlight{Furthermore, $\bz^k$ converges to an optimal $\bz^*$.}
\end{theorem}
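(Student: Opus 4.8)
The plan is to establish \eqref{eq:contractive} as a Fej\'er-type monotonicity inequality in the $G$-norm built directly on the recursion \eqref{eq:recursion-4} of Lemma \ref{lemma:recursion}, and then to deduce convergence by a standard argument. I would first apply the polarization identity $\|\bz^k-\bz^*\|_G^2-\|\bz^{k+1}-\bz^*\|_G^2=\|\bz^k-\bz^{k+1}\|_G^2+2\langle \bz^k-\bz^{k+1},\bz^{k+1}-\bz^*\rangle_G$, where $\langle A,B\rangle_G\triangleq\trace(A^\T G B)$. Comparing with the target \eqref{eq:contractive}, it then suffices to show $2\langle \bz^k-\bz^{k+1},\bz^{k+1}-\bz^*\rangle_G\geq -(1-\zeta)\|\bz^k-\bz^{k+1}\|_G^2$, i.e.\ to bound the cross term below by a multiple of $\|\bz^k-\bz^{k+1}\|_G^2$.

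Next I would simplify the cross term. Since $G=\mathrm{diag}(I,\BW)$, it splits into a $\bq$-part and an $\bx$-part. Using $\bq^{k+1}-\bq^k=U\bx^{k+1}$, the symmetry of $U$, and $U\bx^*=\mathbf 0$ from \eqref{eq:line2}, the $\bq$-part becomes $-\langle \bx^{k+1}-\bx^*,U(\bq^{k+1}-\bq^*)\rangle$, into which I substitute the right-hand side of \eqref{eq:recursion-4}. The key cancellation is that the two $\BW(\bx^{k+1}-\bx^k)$ cross terms arising from the $\bq$-part and the $\bx$-part are equal by symmetry of $\BW$ and cancel, leaving $\langle \bz^k-\bz^{k+1},\bz^{k+1}-\bz^*\rangle_G=\langle \bx^{k+1}-\bx^*,(I+\W-2\BW)(\bx^{k+1}-\bx^*)\rangle+\alpha\langle \bx^{k+1}-\bx^*,\df(\bx^k)-\df(\bx^*)\rangle$. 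The first term is $\geq0$ because $\tfrac{I+\W}{2}\succcurlyeq\BW$ (part 4 of Assumption \ref{ass:matrices}) gives $I+\W-2\BW\succcurlyeq0$, so only the gradient term must be controlled.

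For the gradient term I would split $\bx^{k+1}-\bx^*=(\bx^k-\bx^*)+(\bx^{k+1}-\bx^k)$. On $\langle \bx^k-\bx^*,\df(\bx^k)-\df(\bx^*)\rangle$ I apply the co-coercivity (Baillon--Haddad) inequality, valid because $\f$ is convex with $\Lf$-Lipschitz gradient, to obtain a $+\tfrac{1}{\Lf}\|\df(\bx^k)-\df(\bx^*)\|_\Fro^2$ lower bound; on $\langle \bx^{k+1}-\bx^k,\df(\bx^k)-\df(\bx^*)\rangle$ I apply Young's inequality with its free parameter tuned so that the resulting $\|\df(\bx^k)-\df(\bx^*)\|_\Fro^2$ term exactly cancels the co-coercivity gain. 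This yields $\alpha\langle \bx^{k+1}-\bx^*,\df(\bx^k)-\df(\bx^*)\rangle\geq-\tfrac{\alpha\Lf}{4}\|\bx^{k+1}-\bx^k\|_\Fro^2$. Finally I bound $\|\bx^{k+1}-\bx^k\|_\Fro^2\leq\tfrac{1}{\lambda_{\min}(\BW)}\|\bx^{k+1}-\bx^k\|_{\BW}^2\leq\tfrac{1}{\lambda_{\min}(\BW)}\|\bz^k-\bz^{k+1}\|_G^2$, which produces exactly $2\langle \bz^k-\bz^{k+1},\bz^{k+1}-\bz^*\rangle_G\geq-\tfrac{\alpha\Lf}{2\lambda_{\min}(\BW)}\|\bz^k-\bz^{k+1}\|_G^2$, i.e.\ \eqref{eq:contractive} with $\zeta=1-\tfrac{\alpha\Lf}{2\lambda_{\min}(\BW)}$. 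I expect the delicate point to be making the three constants --- the co-coercivity factor $1/\Lf$, the Young parameter, and $\lambda_{\min}(\BW)$ --- combine to reproduce precisely the stated $\zeta$; a suboptimal split only gives a smaller constant.

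For the convergence claim, the step-size condition $\alpha<\tfrac{2\lambda_{\min}(\BW)}{\Lf}$ makes $\zeta>0$, so \eqref{eq:contractive} shows $\{\|\bz^k-\bz^*\|_G\}$ is nonincreasing (hence $\{\bz^k\}$ is bounded) and, after summation, $\sum_k\|\bz^k-\bz^{k+1}\|_G^2<\infty$, whence $\bz^{k+1}-\bz^k\to\mathbf0$; in particular $\bx^{k+1}-\bx^k\to\mathbf0$ and $U\bx^{k+1}\to\mathbf0$. I would then take a convergent subsequence $\bz^{k_j}\to\bz^\infty=(\bq^\infty;\bx^\infty)$ and pass to the limit in \eqref{eq:recursion-3}: the consensus condition $U\bx^\infty=\mathbf0$ follows from $U\bx^{k+1}\to\mathbf0$, which forces $(I+\W-2\BW)\bx^\infty=\mathbf0$ since $(I+\W-2\BW)\one=\mathbf0$, and what remains gives $U\bq^\infty+\alpha\df(\bx^\infty)=\mathbf0$; together with $\bq^\infty\in\spa{U}$ (as every $\bq^k\in\spa{U}$), Lemma \ref{lemma:opt} shows $\bz^\infty$ is optimal. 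Since \eqref{eq:contractive} holds for every optimal point, I apply it with $\bz^*=\bz^\infty$: then $\|\bz^k-\bz^\infty\|_G$ is nonincreasing yet has a subsequence tending to $0$, so the whole sequence converges to $\bz^\infty$ by an Opial-type argument, and because $G\succ0$ this is genuine convergence of $\bz^k$ to an optimal $\bz^*$.
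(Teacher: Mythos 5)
Your proposal is correct and takes essentially the same route as the paper: both arguments rest on the recursion \eqref{eq:recursion-4}, co-coercivity of $\df$, Young's inequality with the same tuning parameter $\Lf/2$, positive semidefiniteness of $I+\W-2\BW$, and the bound $\|\cdot\|_{\BW}^2\geq\lambda_{\min}(\BW)\|\cdot\|_{\Fro}^2$, producing identical constants --- you merely read the algebra backward from the $G$-norm identity (bounding the cross term $2\langle \bz^k-\bz^{k+1},\bz^{k+1}-\bz^*\rangle_G$), whereas the paper reads it forward from co-coercivity. The only other difference is cosmetic: for the final convergence claim you spell out the standard subsequence/Opial-type contraction argument that the paper delegates to Theorem 3 of \cite{He1994}, which is a fine self-contained substitute.
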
}

\begin{proof}
Following Assumption \ref{ass:functions}, $\df$ is Lipschitz
continuous and thus we have
\begin{equation}\label{eq:convex_proof_1}
\begin{array}{rl}
    &\frac{2\alpha}{\Lf}\|\df(\bx^k)-\df(\bx^*)\|_{\Fro}^2\\
\leq&2\alpha\langle \bx^{k}-\bx^*, \df(\bx^{k}) - \df(\bx^*)\rangle\\
   =&2\langle \bx^{k+1}-\bx^*, \alpha[\df(\bx^{k}) - \df(\bx^*)]\rangle+2\alpha\langle \bx^{k}-\bx^{k+1}, \df(\bx^{k}) - \df(\bx^*)\rangle.
\end{array}
\end{equation}
Substituting \eqref{eq:recursion-4} from Lemma
\ref{lemma:recursion} for $\alpha[\df(\bx^{k}) - \df(\bx^*)]$, it
follows from \eqref{eq:convex_proof_1}
that\begin{equation}\label{eq:convex_proof_2}
 \begin{array}{rcl}
 &    &\frac{2\alpha}{\Lf}\|\df(\bx^k)-\df(\bx^*)\|_{\Fro}^2\\
 &\leq&2\langle \bx^{k+1}-\bx^*,U(\bq^*-\bq^{k+1})\rangle+2\langle \bx^{k+1}-\bx^*,\BW(\bx^k-\bx^{k+1})\rangle\\
 &    &-2\|\bx^{k+1}-\bx^*\|_{I + \W - 2\BW}^2+2\alpha\langle \bx^k-\bx^{k+1}, \df(\bx^k) - \df(\bx^*)\rangle.
 \end{array}
\end{equation}
For the terms on the right-hand-side of \eqref{eq:convex_proof_2},
we have
\begin{equation}\label{eq:temp1_term1}
\begin{array}{rcl}
2\langle \bx^{k+1}-x^*,U(\bq^*-\bq^{k+1})\rangle
&=&2\langle U(\bx^{k+1}-\bx^*),\bq^*-\bq^{k+1}\rangle\\
(\because U\bx^*= \mathbf{0})\quad&=&2\langle U\bx^{k+1},\bq^*-\bq^{k+1}\rangle\\
&=&2\langle \bq^{k+1}-\bq^k,\bq^*-\bq^{k+1}\rangle,\\
\end{array}
\end{equation}
\begin{equation}\label{eq:temp1_term2}
\begin{array}{rcl}
2\langle \bx^{k+1}-\bx^*,\BW(\bx^k-\bx^{k+1})\rangle=2\langle
\bx^{k+1}-\bx^k,\BW(\bx^*-\bx^{k+1})\rangle,
\end{array}
\end{equation}
and
\begin{equation} \label{eq:temp1_term3}
\begin{array}{rcl}
&    &2\alpha\langle \bx^k-\bx^{k+1}, \df(\bx^k) - \df(\bx^*)\rangle\\
&\leq&\frac{\alpha\Lf}{2}\|\bx^k-\bx^{k+1}\|_{\Fro}^2+\frac{2\alpha}{\Lf}\|\df(\bx^k)-\df(\bx^*)\|_{\Fro}^2.
\end{array}
\end{equation}

Plugging \eqref{eq:temp1_term1}--\eqref{eq:temp1_term3} into
\eqref{eq:convex_proof_2} and recalling the definitions of
$\bz^k$, $\bz^*$, and $G$, we have
\begin{equation}\label{eq:convex_proof_3}
 \begin{array}{rcl}
 &    &\frac{2\alpha}{\Lf}\|\df(\bx^k)-\df(\bx^*)\|_{\Fro}^2\\
 &\leq&2\langle \bq^{k+1}-\bq^k,\bq^*-\bq^{k+1}\rangle+ 2\langle \bx^{k+1}-\bx^k,\BW(\bx^*-\bx^{k+1})\rangle\\
 &    &-2\|\bx^{k+1}-\bx^*\|_{I + \W - 2\BW}^2+\frac{\alpha\Lf}{2}\|\bx^k-\bx^{k+1}\|_{\Fro}^2+\frac{2\alpha}{\Lf}\|\df(\bx^k)-\df(\bx^*)\|_{\Fro}^2,
 \end{array}
\end{equation}
that is
\begin{equation}\label{eq:convex_proof_4}
 \begin{array}{l}
 0\leq2\langle \bz^{k+1}-\bz^k,G(\bz^*-\bz^{k+1})\rangle-2\|\bx^{k+1}-\bx^*\|_{I + \W - 2\BW}^2+\frac{\alpha\Lf}{2}\|\bx^k-\bx^{k+1}\|_{\Fro}^2.
 \end{array}
\end{equation}
Apply the basic equality $2\langle
\bz^{k+1}-\bz^k,G(\bz^*-\bz^{k+1})\rangle=\|\bz^k-\bz^*\|_G^2-\|\bz^{k+1}-\bz^*\|_G^2-\|\bz^k-\bz^{k+1}\|_G^2$
to \eqref{eq:convex_proof_4}, we have
\begin{equation}\label{eq:convex_proof_5}
 \begin{array}{rcl}
 0&\leq&\|\bz^k-\bz^*\|_G^2-\|\bz^{k+1}-\bz^*\|_G^2-\|\bz^k-\bz^{k+1}\|_G^2\\
  &    &-2\|\bx^{k+1}-\bx^*\|_{I + \W - 2\BW}^2+\frac{\alpha\Lf}{2}\|\bx^k-\bx^{k+1}\|_{\Fro}^2.
 \end{array}
\end{equation}

Define
$$
G'=\left(
             \begin{array}{cc}
               I &  \mathbf{0} \\
                \mathbf{0} & \BW-\frac{\alpha\Lf}{2}I\\
             \end{array}
           \right).
$$
By Assumption \ref{ass:matrices}, in particular,
$I+\W-2\BW\succcurlyeq0$, we have $\|\bx^{k+1}-\bx^*\|_{I + \W -
2\BW}^2\ge 0$ and thus
$$\|\bz^k-\bz^*\|_G^2-\|\bz^{k+1}-\bz^*\|_G^2\geq\|\bz^k-\bz^{k+1}\|_G - \frac{\alpha\Lf}{2}\|\bx^k-\bx^{k+1}\|_{\Fro}^2= \|\bz^k-\bz^{k+1}\|_{G'}.$$ Since $\alpha<\frac{2\lambda_{\min}(\BW)}{\Lf}$, we have $G'\succ 0$ and
\begin{equation}\label{eq:convex_proof_6}
 \|\bz^k-\bz^{k+1}\|_{G'}^2\geq\zeta\|\bz^k-\bz^{k+1}\|_{G}^2,
\end{equation}
which gives \eqref{eq:contractive}.

\highlight{It shows from \eqref{eq:contractive} that for any optimal $\bz^*$,
$\|\bz^k-\bz^*\|_G^2$ is bounded and contractive, so
$\|\bz^k-\bz^*\|_G^2$ is converging as
$\|\bz^k-\bz^{k+1}\|_G^2\rightarrow0$. The convergence of $\bz^k$
to a solution $\bz^*$ follows from the standard analysis for
contraction methods; see, for example, Theorem 3 in
\cite{He1994}.}\hfill
\end{proof}

To estimate the rate of convergence, we need the following result.
\begin{proposition}\label{lemma:O_1_k}
If a sequence $\{a_k\}\subset\R$ obeys: $a_k\geq0$ and
$\sum_{t=1}^{\infty}a_t<\infty$, then we have\footnote{Part (iii)
is due to \cite{Davis2014}.}:
\emph{(i)}~$\lim_{k\rightarrow\infty}a_k=0$;
\emph{(ii)}~$\frac{1}{k}\sum_{t=1}^{k}a_t=
O\left(\frac{1}{k}\right)$; \emph{(iii)}~$\min_{t\leq
k}\{a_t\}=o\left(\frac{1}{k}\right)$.
\end{proposition}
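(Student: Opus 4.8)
The plan is to prove the three claims about the nonnegative summable sequence $\{a_k\}$ directly, since each follows from elementary properties of convergent series. The statement bundles together three standard facts, and the summability hypothesis $\sum_{t=1}^\infty a_t < \infty$ is the single driver behind all of them. I would handle them in the order (i), (ii), (iii), because (i) is a prerequisite for the intuition behind the others and because (iii) is the only one requiring a genuinely clever (rather than routine) estimate.

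For part (i), I would argue that since the partial sums $S_k = \sum_{t=1}^k a_t$ form a monotone (nondecreasing, as $a_t \geq 0$) sequence bounded above by the finite sum $\sum_{t=1}^\infty a_t$, they converge to a finite limit $S$. Then $a_k = S_k - S_{k-1} \to S - S = 0$, which is exactly the claim that $\lim_{k\to\infty} a_k = 0$. For part (ii), I would simply bound the Cesàro average: since $\sum_{t=1}^k a_t \leq \sum_{t=1}^\infty a_t =: S < \infty$ for every $k$, we get
$$
\frac{1}{k}\sum_{t=1}^k a_t \leq \frac{S}{k} = O\!\left(\frac{1}{k}\right),
$$
and nonnegativity gives the matching lower bound $0$, so the average is indeed $O(1/k)$.

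The genuinely interesting part is (iii), the $o(1/k)$ rate for the running minimum $\min_{t\leq k}\{a_t\}$, and this is where I expect the main obstacle to lie: a naive bound would only yield $O(1/k)$, so I must extract the extra decay implied by the \emph{tail} of a convergent series vanishing. The key idea is to compare the running minimum against a tail sum rather than the full sum. For the indices in a window, say $t \in \{\lceil k/2\rceil, \ldots, k\}$, we have roughly $k/2$ terms each at least $\min_{t\leq k} a_t$, so
$$
\frac{k}{2}\,\min_{t\leq k}\{a_t\} \;\leq\; \sum_{t=\lceil k/2\rceil}^{k} a_t \;\leq\; \sum_{t=\lceil k/2\rceil}^{\infty} a_t.
$$
The right-hand side is the tail of a convergent series and therefore tends to $0$ as $k\to\infty$; call it $\varepsilon_k \to 0$. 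Rearranging gives $\min_{t\leq k}\{a_t\} \leq \frac{2\varepsilon_k}{k}$, i.e. $k \cdot \min_{t\leq k}\{a_t\} \leq 2\varepsilon_k \to 0$, which is precisely the $o(1/k)$ statement. The delicate point to get right is the choice of window (halving the index range) so that both the number of terms grows like $k$ \emph{and} the lower endpoint of the sum tends to infinity, forcing the tail to vanish; this is what separates $o(1/k)$ from the weaker $O(1/k)$. I would cite \cite{Davis2014} for part (iii) as the paper already attributes it there, and present the windowing argument as above.
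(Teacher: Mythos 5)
Your proposal is correct and takes essentially the same route as the paper: parts (i) and (ii) are the same elementary bounds, and for part (iii) your windowing argument over $\{\lceil k/2\rceil,\ldots,k\}$ is just a reindexed version of the paper's bound $k\,c_{2k}\leq\sum_{t=k+1}^{2k}a_t$ with a vanishing tail. If anything, your indexing is marginally cleaner since it bounds $\min_{t\leq k}\{a_t\}$ directly for every $k$, whereas the paper's form implicitly uses monotonicity of the running minimum once more to pass from the subsequence $c_{2k}$ to all $c_k$.
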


\begin{proof}
Part (i) is obvious. Let
$b_k\triangleq\frac{1}{k}\sum_{t=1}^{k}a_t$. By the assumptions,
$kb_k$ is uniformly bounded and obeys
$$\lim\limits_{k\rightarrow\infty} kb_k<\infty,$$
from which part (ii) follows. Since
$c_k\triangleq\min\limits_{t\leq k}\{a_t\}$ is monotonically
non-increasing, we have
$$
kc_{2k}=k\times\min\limits_{t\leq
2k}\{a_t\}\leq\sum\limits_{t=k+1}^{2k}a_t.
$$
This and the fact that
$\lim_{k\to\infty}\sum_{t=k+1}^{2k}a_t\rightarrow 0$ give us
$c_{k}=o\left(\frac{1}{k}\right)$ or part (iii).\hfill
\end{proof}

\highlight{\begin{theorem}\label{theorem:1_k}
In the same setting of Theorem \ref{lemma:contractive}, the
following rates hold:

\begin{itemize}
  \item[\emph{(1)}] Running-average progress: $$\frac{1}{k}\sum\limits_{t=1}^{k}\|\bz^t-\bz^{t+1}\|_G^2=
O\left(\frac{1}{k}\right);$$
  \item[\emph{(2)}] Running-best progress: $$\min_{t\leq k}\left\{\|\bz^t-\bz^{t+1}\|_G^2\right\}=o\left(\frac{1}{k}\right);$$
  \item[\emph{(3)}] Running-average  optimality residuals: $$\frac{1}{k}\sum_{t=1}^{k}\|U\bq^t+\alpha\df(\bx^t)\|_{\BW}^2 =O\left(\frac{1}{k}\right) \text{ and } \frac{1}{k}\sum_{t=1}^{k}\|U\bx^t\|_{\Fro}^2 =O\left(\frac{1}{k}\right);$$
  \item[\emph{(4)}] Running-best optimality residuals: $$\min_{t\leq k}\left\{\|U\bq^t+\alpha\df(\bx^t)\|_{\BW}^2\right\}=o\left(\frac{1}{k}\right) \text{ and } \min_{t\leq k}\left\{\|U\bx^t\|_{\Fro}^2\right\} =o\left(\frac{1}{k}\right);$$
\end{itemize}

\end{theorem}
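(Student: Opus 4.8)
The plan is to reduce all four rates to a single summability statement and then invoke Proposition \ref{lemma:O_1_k}. The engine is the contraction inequality \eqref{eq:contractive} of Theorem \ref{lemma:contractive}, which telescopes: summing $\zeta\|\bz^t-\bz^{t+1}\|_G^2\le\|\bz^t-\bz^*\|_G^2-\|\bz^{t+1}-\bz^*\|_G^2$ over $t=0,1,\ldots$ yields
$$\sum_{t=0}^{\infty}\|\bz^t-\bz^{t+1}\|_G^2\le\tfrac{1}{\zeta}\|\bz^0-\bz^*\|_G^2<\infty,$$
so $a_t\triangleq\|\bz^t-\bz^{t+1}\|_G^2$ is nonnegative and summable. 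Parts (ii) and (iii) of Proposition \ref{lemma:O_1_k} applied to $\{a_t\}$ then give parts (1) and (2) of the theorem verbatim.

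For the $\|U\bx^t\|_\Fro^2$ claim in parts (3)--(4), I would use the auxiliary sequence $\bq^k=\sum_{t=0}^{k}U\bx^t$, which gives $U\bx^t=\bq^t-\bq^{t-1}$. Since the upper-left block of $G$ is the identity, $\|\bq^{t-1}-\bq^t\|_\Fro^2$ is exactly the $\bq$-block of $\|\bz^{t-1}-\bz^t\|_G^2$, so $\|U\bx^t\|_\Fro^2\le\|\bz^{t-1}-\bz^t\|_G^2$. The right-hand side is summable by the first paragraph, hence Proposition \ref{lemma:O_1_k} delivers both the $O(1/k)$ average and $o(1/k)$ best rates for $\|U\bx^t\|_\Fro^2$.

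The substantive work is the residual $\|U\bq^t+\alpha\df(\bx^t)\|_{\BW}^2$. Rearranging the recursion \eqref{eq:recursion-4} and cancelling via optimality $U\bq^*+\alpha\df(\bx^*)=\mathbf 0$, I would obtain
$$U\bq^{k+1}+\alpha\df(\bx^{k+1})=-(I+\W-2\BW)(\bx^{k+1}-\bx^*)-\BW(\bx^{k+1}-\bx^k)+\alpha[\df(\bx^{k+1})-\df(\bx^k)],$$
and then bound each of the three terms' squared Frobenius norm by a summable quantity. The middle term satisfies $\|\BW(\bx^{k+1}-\bx^k)\|_\Fro^2\le\lambda_{\max}(\BW)\|\bx^{k+1}-\bx^k\|_{\BW}^2\le\lambda_{\max}(\BW)\|\bz^k-\bz^{k+1}\|_G^2$; the last term, by Lipschitz continuity and $\BW\succcurlyeq\lambda_{\min}(\BW)I$, satisfies $\alpha^2\|\df(\bx^{k+1})-\df(\bx^k)\|_\Fro^2\le\frac{\alpha^2\Lf^2}{\lambda_{\min}(\BW)}\|\bz^k-\bz^{k+1}\|_G^2$; both are summable by the first paragraph.

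The hard part is the first term $\|(I+\W-2\BW)(\bx^{k+1}-\bx^*)\|_\Fro^2$. With $M\triangleq I+\W-2\BW\succcurlyeq0$ one has $M^2\preccurlyeq\lambda_{\max}(M)M$, so $\|M(\bx^{k+1}-\bx^*)\|_\Fro^2\le\lambda_{\max}(M)\|\bx^{k+1}-\bx^*\|_{M}^2$, and it remains to prove $\sum_k\|\bx^{k+1}-\bx^*\|_{I+\W-2\BW}^2<\infty$. This is \emph{not} immediate from \eqref{eq:contractive}; instead I would revisit \eqref{eq:convex_proof_5} and retain the term $-2\|\bx^{k+1}-\bx^*\|_{I+\W-2\BW}^2$ that was discarded when proving the contraction. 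Using $\|\bz^k-\bz^{k+1}\|_{G'}^2\ge\zeta\|\bz^k-\bz^{k+1}\|_G^2$ (cf.\ \eqref{eq:convex_proof_6}), \eqref{eq:convex_proof_5} rearranges to
$$\zeta\|\bz^k-\bz^{k+1}\|_G^2+2\|\bx^{k+1}-\bx^*\|_{I+\W-2\BW}^2\le\|\bz^k-\bz^*\|_G^2-\|\bz^{k+1}-\bz^*\|_G^2,$$
whose right side telescopes, so the desired sum is finite. Combining the three summable bounds through $\|a+b+c\|_\Fro^2\le3(\|a\|_\Fro^2+\|b\|_\Fro^2+\|c\|_\Fro^2)$ and passing from the Frobenius to the $\BW$-norm via $\|\cdot\|_{\BW}^2\le\lambda_{\max}(\BW)\|\cdot\|_\Fro^2$ shows that $\{\|U\bq^t+\alpha\df(\bx^t)\|_{\BW}^2\}$ is summable; Proposition \ref{lemma:O_1_k} then closes parts (3) and (4).
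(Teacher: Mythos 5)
Your proposal is correct, and for parts (1)--(2) it coincides exactly with the paper's own proof: telescope \eqref{eq:contractive} to get summability of $\|\bz^t-\bz^{t+1}\|_G^2$, then invoke Proposition \ref{lemma:O_1_k}. For parts (3)--(4), however, you take a genuinely different route. The paper works from the identity $\|\bz^k-\bz^{k+1}\|_G^2=\|\bx^{k+1}\|_{\BW-W}^2+\|(I-\BW)\bx^k+U\bq^k+\alpha\df(\bx^k)\|_{\BW}^2$ (equation \eqref{eq:FOOR}, a rewriting of the recursion \eqref{eq:recursion-2}), applies the reverse inequality $\|\mathbf{a}+\mathbf{b}\|_{\BW}^2\geq\frac{1}{\rho}\|\mathbf{a}\|_{\BW}^2-\frac{1}{\rho-1}\|\mathbf{b}\|_{\BW}^2$, controls the nuisance term via $\|(I-\BW)\bx^k\|_{\BW}^2\leq\upsilon\|\bx^k\|_{\BW-W}^2$ using the null-space inclusion $\nul{\BW-W}\subseteq\nul{(I-\BW)\BW(I-\BW)}$, and then runs a weighted telescoping with $\rho>\upsilon+1$ that delivers both residual rates simultaneously; notably this uses only the \emph{conclusion} of Theorem \ref{lemma:contractive}. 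You instead dispatch the consensus residual immediately ($\|U\bx^t\|_\Fro^2=\|\bq^{t-1}-\bq^t\|_\Fro^2\leq\|\bz^{t-1}-\bz^t\|_G^2$, which is cleaner than the paper's intertwined handling), and for $\|U\bq^{k+1}+\alpha\df(\bx^{k+1})\|_{\BW}^2$ you split \eqref{eq:recursion-4} into three pieces and bound each separately; the only nontrivial piece is $\|\bx^{k+1}-\bx^*\|_{I+\W-2\BW}^2$, whose summability indeed does \emph{not} follow from \eqref{eq:contractive} alone, and you correctly recover it by re-opening the proof of Theorem \ref{lemma:contractive} at \eqref{eq:convex_proof_5} and retaining the term $-2\|\bx^{k+1}-\bx^*\|_{I+\W-2\BW}^2$ that the paper discards there. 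The trade-off: the paper's argument is black-box in Theorem \ref{lemma:contractive} but requires the $\rho$/$\upsilon$ bookkeeping; yours is more modular and transparent term by term, but depends on a strengthened intermediate inequality from inside that theorem's proof (perfectly legitimate here, since that inequality is established in the same paper) and yields somewhat looser constants, e.g.\ the factor $3$ from the three-term splitting.
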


\begin{proof}
{Parts (1) and  (2): Since the individual terms
$\|\bz^k-\bz^*\|_G^2$ converge to $0$, we are able to sum
\eqref{eq:contractive} in Theorem \ref{lemma:contractive} over $k=0$
through $\infty$ and apply the telescopic cancellation, i.e.,
\begin{equation}\label{eq:sum_bounded}
 \begin{array}{c}
 \sum\limits_{t=0}^{\infty}\|\bz^t-\bz^{t+1}\|_G^2=\frac{1}{\delta}\sum\limits_{t=0}^{\infty}\left(\|\bz^t-\bz^*\|_G^2-\|\bz^{t+1}-\bz^*\|_G^2\right)=\frac{\|\bz_0-\bz^*\|_G^2}{\delta}<\infty.
 \end{array}
\end{equation}
Then, the results follow from Proposition \ref{lemma:O_1_k}
immediately.

Parts (3) and (4): The progress $\|\bz^k-\bz^{k+1}\|_G^2$ can be
interpreted as the residual to the first-order optimality
condition. In light of the first-order optimality conditions
\eqref{eq:line1} and \eqref{eq:line2} in Lemma \ref{lemma:opt},
the \emph{optimality residuals} are defined as
$\|U\bq^k+\alpha\df(\bx^k)\|_{\BW}^2$ and $\|U\bx^k\|_{\Fro}^2$.
Furthermore, $\|\frac{1}{\alpha}\one^\T(U\bq^k+\alpha\df(\bx^k))\|_2^2=\|\nabla
f_1(x_{(1)}^k)+...+\nabla f_n(x_{(n)}^k)\|_2^2$ is the violation to
the first-order optimality of \eqref{eq:F}, while
$\|U\bx^k\|_{\Fro}^2$ is the violation of consensus. Below we
obtain the convergence rates of the optimality residuals.

Using the basic inequality
$\|\mathbf{a}+\mathbf{b}\|_{\Fro}^2\geq\frac{1}{\rho}\|\mathbf{a}\|_{\Fro}^2-\frac{1}{\rho-1}\|\mathbf{b}\|_{\Fro}^2$
which holds for any $\rho>1$ and any matrices $\mathbf{a}$ and
$\mathbf{b}$ of the same size, it follows that
\begin{equation}\label{eq:FOOR}
 \begin{array}{rcl}
 \|\bz^k-\bz^{k+1}\|_G^2&=&\|\bq^k-\bq^{k+1}\|_{\Fro}^2+\|\bx^k-\bx^{k+1}\|_{\BW}^2\\
                        &=&\|\bx^{k+1}\|_{\BW-W}^2+\|(I-\BW)\bx^k+U\bq^k+\alpha\df(\bx^k)\|_{\BW}^2\\
                        &\geq&\|\bx^{k+1}\|_{\BW-W}^2+\frac{1}{\rho}\|U\bq^k+\alpha\df(\bx^k)\|_{\BW}^2-\frac{1}{\rho-1}\|(I-\BW)\bx^k\|_{\BW}^2.
 \end{array}
\end{equation}

Since $\BW-W$ and $(I-\BW)\BW(I-\BW)$ are symmetric and
$$\nul{\BW-W}\subseteq\nul{(I-\BW)\BW(I-\BW)},$$ there exists a bounded $\upsilon>0$ such that
$\|(I-\BW)\bx^k\|_{\BW}^2=\|\bx^k\|_{(I-\BW)\BW(I-\BW)}^2\leq\upsilon\|\bx^k\|_{\BW-W}^2$.
It follows from \eqref{eq:FOOR} that
\begin{equation}\label{eq:FOOR_2}
 \begin{array}{rcl}
 &    &\frac{1}{k}\sum\limits_{t=1}^{k}\|\bz^t-\bz^{t+1}\|_G^2+\frac{1}{k}\|\bx^{1}\|_{\BW-W}^2\\
 &\geq&\frac{1}{k}\sum\limits_{t=1}^{k}\left(\|\bx^{t+1}\|_{\BW-W}^2-\frac{\upsilon}{\rho-1}\|\bx^t\|_{\BW-W}^2\right)+\frac{1}{k}\|\bx^{1}\|_{\BW-W}^2\\
 &    &+\frac{1}{k}\sum\limits_{t=1}^{k}\frac{1}{\rho}\|U\bq^t+\alpha\df(\bx^t)\|_{\BW}^2\text{ (Set $\rho>\upsilon+1$)}\\
 & =  &\frac{1}{k}\sum\limits_{t=1}^{k}(1-\frac{\upsilon}{\rho-1})\|U\bx^t\|_{\Fro}^2
 +\frac{1}{k}\|\bx^{k+1}\|_{\BW-W}^2+\frac{1}{k}\sum\limits_{t=1}^{k}\frac{1}{\rho}\|U\bq^t+\alpha\df(\bx^t)\|_{\BW}^2.
 \end{array}
\end{equation}

As part (1) shows that
$\frac{1}{k}\sum_{t=1}^{k}\|\bz^t-\bz^{t+1}\|_G^2=O\left(\frac{1}{k}\right)$,
we have
$\frac{1}{k}\sum_{t=1}^{k}\|U\bq^t+\alpha\df(\bx^t)\|_{\BW}^2 =
O\left(\frac{1}{k}\right)$ and
$\frac{1}{k}\sum_{t=1}^{k}\|U\bx^t\|_{\Fro}^2 =
O\left(\frac{1}{k}\right)$.

From \eqref{eq:FOOR_2} and \eqref{eq:sum_bounded}, we see that
both $\|U\bq^t+\alpha\df(\bx^t)\|_{\BW}^2$ and
$\|U\bx^t\|_{\Fro}^2$ are summable. Again, by Proposition
\ref{lemma:O_1_k}, we have part (4), the
$o\left(\frac{1}{k}\right)$ rate of running best first-order
optimality residuals.} \hfill
\end{proof}}

It is open whether  $\|\bz^k-\bz^{k+1}\|_G^2$ is monotonic or not. If one can show its monotonicity, then  the convergence rates will hold for the last point in the running sequence. 


\subsection{Linear Convergence under Restricted Strong Convexity}
In this subsection we prove that EXTRA with a proper step size
reaches linear convergence if the original objective $\bar{f}$ is
restricted strongly convex.

A convex function $h: \R^p\rightarrow \R$ is \emph{strongly
convex} if there exists $\mu>0$ such that
$$\langle \nabla h(x_a)-\nabla h(x_b),x_a-x_b \rangle \ge \mu \|x_a - x_b\|^2,\quad \forall x_a,x_b\in\R^p.$$
 $h$  is \emph{restricted strongly convex}\footnote{There are different definitions of restricted strong convexity. Ours is derived from \cite{Lai2013}.} with respect to point $\tilde{x}$ if there exists  $\mu>0$ such that $$\langle\nabla h(x)-\nabla h(\tilde{x}),x-\tilde{x}\rangle\geq\mu\|x-\tilde{x}\|_2^2,\quad \forall x\in\R^p.$$

For proof convenience, we introduce  function
$$\g(\bx)\triangleq\f(\bx)+\frac{1}{4\alpha}\|\bx\|_{\BW-W}^2$$
and claim that $\bar{f}$ is restricted strongly convex with
respect to its solution $x^*$ if, and only if, $\g$ is so with
respect to $\bx^* = \one (x^*)^\T$.

\begin{proposition}\label{prop:strongly_convex} Under Assumptions \ref{ass:matrices} and \ref{ass:functions}, the following two statements are equivalent:
\begin{enumerate}
  \item[(i)] The original objective $\bar{f}(x)=\frac{1}{n}\sum_{i=1}^n f_i(x)$ is restricted strongly convex with respect to $x^*$;
  \item[(ii)] The penalized function $\g(\bx)=\f(\bx)+\frac{1}{4\alpha}\|\bx\|_{\BW-W}^2$ is restricted strongly convex with respect to $\bx^*$.
\end{enumerate}
\highlight{In addition, the strong convexity constant of $\g$ is no less than that  of $\bar{f}$.}
\end{proposition}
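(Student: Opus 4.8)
My plan is to prove the equivalence by directly relating the restricted strong convexity inequality for $\g$ to the one for $\bar{f}$, exploiting the structure of the added quadratic term. First I would write the defining inequality for the restricted strong convexity of $\g$ with respect to $\bx^*$: for all $\bx \in \R^{n\times p}$,
$$\langle \dg(\bx)-\dg(\bx^*), \bx-\bx^*\rangle \geq \mu_{\g}\|\bx-\bx^*\|_{\Fro}^2,$$
where $\dg(\bx)=\df(\bx)+\frac{1}{2\alpha}(\BW-W)\bx$. Since $\BW-W\succcurlyeq 0$ by Assumption \ref{ass:matrices} (part 4), the quadratic term contributes $\frac{1}{2\alpha}\langle (\BW-W)(\bx-\bx^*),\bx-\bx^*\rangle = \frac{1}{2\alpha}\|\bx-\bx^*\|_{\BW-W}^2\geq 0$ to the inner product. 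This immediately shows that restricted strong convexity of $\f$ implies that of $\g$ with a constant no smaller than that of $\f$, which gives one direction and the final ``in addition'' claim almost for free.

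The key step is to connect the restricted strong convexity of the \emph{aggregate} $\f$ to that of the \emph{averaged} scalar objective $\bar{f}$. The bridge is the observation that $\bx^*=\one(x^*)^\T$ is consensual, so I would test the inequality along consensual directions $\bx=\one x^\T$. Along such directions, $\|\bx\|_{\BW-W}^2 = \langle (\BW-W)\one x^\T, \one x^\T\rangle = 0$ because $(\BW-W)\one=\mathbf{0}$ (from $\nul{\W-\BW}=\spa{\one}$ in part 3), so the penalty term vanishes and $\dg$ reduces to $\df$. Moreover, for consensual $\bx=\one x^\T$ and $\bx^*=\one (x^*)^\T$ one computes $\langle \df(\bx)-\df(\bx^*),\bx-\bx^*\rangle = \sum_{i=1}^n \langle \nabla f_i(x)-\nabla f_i(x^*),x-x^*\rangle = n\langle \nabla \bar{f}(x)-\nabla \bar{f}(x^*),x-x^*\rangle$ and $\|\bx-\bx^*\|_{\Fro}^2 = n\|x-x^*\|_2^2$. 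Hence testing the $\g$-inequality on consensual points yields exactly the restricted strong convexity inequality for $\bar{f}$ with the same constant, giving the implication (ii)$\Rightarrow$(i).

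For the converse (i)$\Rightarrow$(ii), I would combine the two observations: split an arbitrary $\bx$ into its consensual component and its orthogonal complement. The main obstacle is that restricted strong convexity only controls $\df$ along the single direction toward $\bx^*$, so for a general (non-consensual) $\bx$ I cannot directly invoke the scalar inequality for $\bar{f}$. The resolution I anticipate is that the penalty $\frac{1}{4\alpha}\|\bx\|_{\BW-W}^2$ supplies the missing strong convexity precisely on $(\spa{\one})^\perp$: since $\BW-W$ is positive definite on the orthogonal complement of $\spa{\one}$ (its null space is exactly $\spa{\one}$), the quadratic term is strongly convex in the non-consensual directions, while $\bar{f}$'s restricted strong convexity handles the consensual direction through $\bx^*$. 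The careful point, which I expect to be the crux, is verifying that these two contributions combine to give a single uniform constant $\mu_{\g}>0$ over all of $\R^{n\times p}$; I would handle this by decomposing $\bx-\bx^*$ and bounding the cross terms using convexity (monotonicity) of $\df$ together with the positive semidefiniteness of $\BW-W$. Since the proposition only asserts existence of \emph{some} $\mu_{\g}>0$ and that it is no less than $\mu_{\bar{f}}$, it suffices to retain the consensual contribution and discard the nonnegative orthogonal one, so the bound $\mu_{\g}\geq \mu_{\bar{f}}$ follows directly from the first paragraph's monotone-penalty argument.
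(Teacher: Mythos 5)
Your (ii) $\Rightarrow$ (i) argument --- testing the inequality for $\g$ on consensual points $\bx=\one x^\T$, where the penalty vanishes because $(\BW-\W)\one=\mathbf{0}$ and both sides pick up a factor of $n$ --- is correct and is precisely the paper's proof of that direction. The genuine gap is in (i) $\Rightarrow$ (ii). Your first paragraph proves ``restricted strong convexity of $\f$ implies that of $\g$,'' but hypothesis (i) concerns $\bar{f}$, not the aggregate $\f$, and restricted strong convexity of $\bar{f}$ is strictly weaker than that of $\f$: take $n=2$, $f_1(x)=\frac{1}{2}\|x\|_2^2$, $f_2(x)=0$; then $\bar{f}$ is strongly convex, while $\f(\bx)=\frac{1}{2}\|x_{(1)}\|_2^2$ is flat in the $x_{(2)}$ coordinate, i.e., in non-consensual directions. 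Consequently your closing sentence --- that $\mug\geq\mubarf$ ``follows directly from the first paragraph's monotone-penalty argument'' --- is circular, since that argument is never available under (i). It is also false as a statement about moduli for fixed $\alpha$: in the same example, along directions that move only $x_{(2)}$, the modulus of $\g$ is of order $\frac{1}{\alpha}$, which is far below $\mubarf$ when $\alpha$ is large. Indeed, the paper's own remark after its appendix proof concedes that in the (i) $\Rightarrow$ (ii) direction one only gets $\mug<\mubarf$, with $\sup_{\gamma,\alpha}\mug=\mubarf$ approached as $\alpha\to 0$; the ``in addition'' clause of the proposition is carried by the (ii) $\Rightarrow$ (i) direction, where $\bar{f}$ inherits exactly the constant of $\g$ (so $\mubarf$ is defined to be $\mug$), not by endowing $\g$ with a modulus at least $\mubarf$.

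Your third paragraph does anticipate the right plan --- the orthogonal decomposition $\bx=\bu+\bv$ with $\bu$ consensual and $\bv\in(\spa{\one})^\perp$, which is exactly the paper's device --- but the tools you propose, monotonicity of $\df$ and positive semidefiniteness of $\BW-\W$, cannot close it. Monotonicity only yields $\langle \df(\bx)-\df(\bu),\bx-\bu\rangle\geq 0$; the dangerous terms are the cross terms $\langle \df(\bu)-\df(\bx^*),\bx-\bu\rangle$ and $\langle \df(\bx)-\df(\bu),\bu-\bx^*\rangle$, which can be negative and require the Lipschitz continuity of $\df$ (Assumption \ref{ass:functions}) to be bounded below by $-\Lf\|\bu-\bx^*\|_\Fro\|\bv\|_\Fro$. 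The paper then absorbs the resulting $-2\Lf\|\bu-\bx^*\|_\Fro\|\bv\|_\Fro$ by a two-case analysis on whether $\|\bv\|_\Fro\leq\gamma\|\bu-\bx^*\|_\Fro$, playing the restricted strong convexity of $\bar{f}$ (which controls only the $\bu$ part) against the penalty's modulus $\frac{\tilde{\lambda}_{\min}(\BW-\W)}{2\alpha}$ on $(\spa{\one})^\perp$, and choosing $\gamma=\frac{\mubarf}{4\Lf}$ to obtain $\mug=\min\bigl\{\mubarf-2\Lf\gamma,\ \frac{\tilde{\lambda}_{\min}(\BW-\W)}{2\alpha(1+\frac{1}{\gamma^2})}\bigr\}>0$. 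This balancing step --- which is where the Lipschitz constant, the nonzero spectral gap of $\BW-\W$, and the step size $\alpha$ all enter, and which produces a constant strictly smaller than $\mubarf$ --- is the crux you flagged but did not supply; without it, the implication (i) $\Rightarrow$ (ii) remains unproven.
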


See Appendix \ref{sec:str_cvx_proof} for its proof.

\begin{theorem} \label{theorem:linear}
If $\g(\bx)\triangleq\f(\bx)+\frac{1}{4\alpha}\|\bx\|_{\BW-\W}^2$
is restricted strongly convex with respect to $\bx^*$ with
constant $\mug>0$, then with proper step size
$\alpha<\frac{2\mug\lambda_{\min}(\BW)}{\Lf^2}$, there exists
$\delta>0$ such that the sequence $\{\bz^k\}$ generated by EXTRA
satisfies
\begin{equation} \label{eq:main}
\begin{array}{cl}
\|\bz^k - \bz^*\|_G^2\geq(1+\delta)\|\bz^{k+1} - \bz^*\|_G^2.
\end{array}
\end{equation}
That is, $\|\bz^k-\bz^*\|_G^2$ converges to $0$ at the Q-linear
rate $O\big((1+\delta)^{-k}\big)$. Consequently,
$\|\bx^k-\bx^*\|_{\BW}^2$ converges to $0$ at the R-linear rate
$O\big((1+\delta)^{-k}\big)$.
\end{theorem}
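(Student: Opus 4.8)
The plan is to prove \eqref{eq:main} in its equivalent form $\|\bz^k-\bz^*\|_G^2-\|\bz^{k+1}-\bz^*\|_G^2\ge\delta\|\bz^{k+1}-\bz^*\|_G^2$, i.e.\ to show that the one-step decrease of the Lyapunov quantity $\|\bz^k-\bz^*\|_G^2$ dominates a fixed fraction of $\|\bz^{k+1}-\bz^*\|_G^2$ itself. I would reuse the entire algebraic machinery behind Theorem \ref{lemma:contractive}, which already turns the recursion \eqref{eq:recursion-4} into a one-step decrease, but strengthen its very first inequality so that restricted strong convexity leaves behind a genuinely positive, $\bx$-dependent surplus rather than merely the nonnegative multiple of $\|\bz^k-\bz^{k+1}\|_G^2$ available in the convex case.

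\textbf{Step A (inject strong convexity).} In the proof of Theorem \ref{lemma:contractive}, inequality \eqref{eq:convex_proof_1} used only co-coercivity of $\df$. I would instead combine co-coercivity with the restricted strong convexity of $\g$ (assumed here with constant $\mug$, and equivalent to that of $\bar f$ by Proposition \ref{prop:strongly_convex}). Since $\nabla\g(\bx)=\df(\bx)+\frac{1}{2\alpha}(\BW-\W)\bx$ and $(\BW-\W)\bx^*=U^2\bx^*=\mathbf{0}$ by \eqref{eq:line2}, the restricted strong convexity yields $\langle\df(\bx^k)-\df(\bx^*),\bx^k-\bx^*\rangle\ge\mug\|\bx^k-\bx^*\|_{\Fro}^2-\frac{1}{2\alpha}\|\bx^k-\bx^*\|_{\BW-\W}^2$. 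Taking a convex combination of this with the co-coercivity bound and pushing it through the same chain \eqref{eq:convex_proof_1}--\eqref{eq:convex_proof_5}, the $\|\df\|_\Fro^2$ terms again cancel and I expect to reach a strengthened decrease of the form $\|\bz^k-\bz^*\|_G^2-\|\bz^{k+1}-\bz^*\|_G^2\ge\|\bz^k-\bz^{k+1}\|_{G'}^2+c\,\|\bx^k-\bx^*\|_{\Fro}^2$ for some $c>0$ (with $G'\succ0$), where the $\mug$-term is balanced against the residual penalty term $\frac{1}{2\alpha}\|\bx^k-\bx^*\|_{\BW-\W}^2$. Keeping $c>0$ and $G'\succ0$ is precisely what forces the sharper step-size bound $\alpha<\frac{2\mug\lambda_{\min}(\BW)}{\Lf^2}$.

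\textbf{Step B (transfer control to $\bq$).} The surplus from Step A controls only the $\bx$-block, whereas $\|\bz^{k+1}-\bz^*\|_G^2=\|\bq^{k+1}-\bq^*\|_\Fro^2+\|\bx^{k+1}-\bx^*\|_{\BW}^2$ also contains the dual block. I would bound $\|\bx^{k+1}-\bx^*\|_{\BW}^2\le\lambda_{\max}(\BW)\|\bx^{k+1}-\bx^*\|_\Fro^2$ directly, and handle the dual block by noting that $\bq^{k+1}-\bq^*\in\spa{U}$, so $\|\bq^{k+1}-\bq^*\|_\Fro^2\le\big(\tilde{\lambda}_{\min}(\BW-\W)\big)^{-1}\|U(\bq^{k+1}-\bq^*)\|_\Fro^2$. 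Substituting the recursion \eqref{eq:recursion-4} for $U(\bq^{k+1}-\bq^*)$ and using the Lipschitz bound $\|\df(\bx^k)-\df(\bx^*)\|_\Fro\le\Lf\|\bx^k-\bx^*\|_\Fro$, the dual block is majorized by a fixed combination of $\|\bx^{k+1}-\bx^*\|_\Fro^2$, $\|\bx^{k+1}-\bx^k\|_\Fro^2$, and $\|\bx^k-\bx^*\|_\Fro^2$, i.e.\ of the very quantities appearing in (or bounded by) Step A.

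\textbf{Step C (assemble $\delta$).} Combining Steps A and B, the decrease $\|\bz^k-\bz^*\|_G^2-\|\bz^{k+1}-\bz^*\|_G^2$ is bounded below by a positive form in $(\bx^k-\bx^*,\bx^{k+1}-\bx^k)$, while $\|\bz^{k+1}-\bz^*\|_G^2$ is bounded above by the same quantities (after converting $\|\bx^{k+1}-\bx^*\|_\Fro^2\le 2\|\bx^k-\bx^*\|_\Fro^2+2\|\bx^{k+1}-\bx^k\|_\Fro^2$); choosing $\delta>0$ below the ratio of the two forms yields \eqref{eq:main}. Iterating \eqref{eq:main} gives the Q-linear decay of $\|\bz^k-\bz^*\|_G^2$, and since $\|\bx^k-\bx^*\|_{\BW}^2\le\|\bz^k-\bz^*\|_G^2$, the R-linear decay of $\|\bx^k-\bx^*\|_{\BW}^2$ follows. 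I expect the main obstacle to be Step B together with the index mismatch inherent in EXTRA: strong convexity naturally produces $\|\bx^k-\bx^*\|_\Fro^2$, the recursion mixes iterates at $k$ and $k+1$, and the Lyapunov norm weights the dual $\bq$ rather than $\bx$; so the delicate part is the simultaneous bookkeeping of the Young's-inequality parameters, the weight $1/\tilde{\lambda}_{\min}(\BW-\W)$, and the penalty residual term so that every coefficient stays positive under the stated step-size condition.
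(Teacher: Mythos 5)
Your plan has the same global architecture as the paper's proof: strengthen the one-step contraction of Theorem~\ref{lemma:contractive} so that restricted strong convexity leaves a positive primal surplus, then bound the dual block $\|\bq^{k+1}-\bq^*\|_{\Fro}^2$ through the recursion \eqref{eq:recursion-4}, the Lipschitz continuity of $\df$, and the factor $\big(\tilde\lambda_{\min}(\BW-\W)\big)^{-1}$ (valid because the columns of $\bq^{k+1}-\bq^*$ lie in $\spa{U}$), and finally pick $\delta$ below the ratio of the two quadratic forms. Your Steps B and C are, modulo bookkeeping, exactly the paper's derivation of \eqref{eq:linear_proof_6}--\eqref{eq:explicit_rate}.

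The genuine gap is in Step A, precisely at the point you yourself flag as delicate. Applying restricted strong convexity at $\bx^k$ produces, after the whole inequality is scaled by $2\alpha$, the deficit $-(1-\lambda)\|\bx^k-\bx^*\|_{\BW-\W}^2$ against the surplus $2\alpha\mug(1-\lambda)\|\bx^k-\bx^*\|_{\Fro}^2$: the deficit is of order one in $\alpha$ while the surplus is of order $\alpha$, so the ``balancing'' you invoke would require $2\alpha\mug\geq\lambda_{\max}(\BW-\W)$ on non-consensual directions, a \emph{lower} bound on $\alpha$ that is incompatible with the theorem's regime $\alpha<\frac{2\mug\lambda_{\min}(\BW)}{\Lf^2}$ whenever $4\mug^2\lambda_{\min}(\BW)<\Lf^2\lambda_{\max}(\BW-\W)$ (i.e., for any ill-conditioned problem). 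As written, your $c$ is negative, and Step A collapses. The missing idea is where the deficit actually goes: the paper applies restricted strong convexity at $\bx^{k+1}$, so the deficit becomes $-\|\bx^{k+1}-\bx^*\|_{\BW-\W}^2=-\|U\bx^{k+1}\|_{\Fro}^2=-\|\bq^{k+1}-\bq^k\|_{\Fro}^2$, which is \emph{exactly} the $\bq$-block of the progress term $\|\bz^k-\bz^{k+1}\|_G^2$ and cancels it identically --- this is why $-(\BW-\W)$ sits inside the quadratic form of \eqref{eq:linear_proof_4} and why no $\bq$-increment survives in \eqref{eq:linear_proof_6}. The price of that index shift is the cross term $2\alpha\langle\bx^{k+1}-\bx^*,\df(\bx^{k+1})-\df(\bx^k)\rangle$, and it is Young's inequality applied to \emph{that} term (see \eqref{eq:temp2_term1}) --- not your requirement ``$c>0$ and $G'\succ0$'' --- which generates the $\Lf^2$ step-size bound in the statement. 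Your scheme can be repaired while keeping the index at $k$: split $\|\bx^k-\bx^*\|_{\BW-\W}^2\leq 2\|\bx^{k+1}-\bx^*\|_{\BW-\W}^2+2\|\bx^k-\bx^{k+1}\|_{\BW-\W}^2$ and take the combination weight $\lambda\geq\tfrac12$, so the first piece is absorbed by the $\bq$-progress and the second only degrades the $\BW$-block of $G'$; but this absorption device (or the paper's index shift) is the crux of the argument, and without one of them Step A, and hence the proof, does not go through.
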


\begin{proof}
{\textbf{Toward a lower bound of
$\|\bz^k-\bz^*\|_G^2-\|\bz^{k+1}-\bz^*\|_G^2$:}} From the
definition of $\g$ and its restricted strong convexity, we have
\begin{equation}\label{eq:linear_proof_1}
 \begin{array}{rcl}
2\alpha \mug\|\bx^{k+1}-\bx^*\|_{\Fro}^2 &\leq&2\alpha\langle \bx^{k+1}-\bx^*,\dg(\bx^{k+1}) - \dg(\bx^*)\rangle\\
 & =  &\|\bx^{k+1}-\bx^*\|_{\BW-\W}^2+2\alpha\langle \bx^{k+1}-\bx^*,\df(\bx^{k+1}) - \df(\bx^k)\rangle\\
 &    &+2\langle \bx^{k+1}-\bx^*,\alpha[\df(\bx^{k}) - \df(\bx^*)]\rangle.
 \end{array}
\end{equation}
Using Lemma \ref{lemma:recursion} for $\alpha[\df(\bx^{k}) -
\df(\bx^*)]$ in  \eqref{eq:linear_proof_1}, we get
\begin{equation}\label{eq:linear_proof_2}
 \begin{array}{rcl}
 &    &2\alpha \mug\|\bx^{k+1}-\bx^*\|_{\Fro}^2\\
 &\leq&\|\bx^{k+1}-\bx^*\|_{\BW-\W}^2+2\alpha\langle \bx^{k+1}-\bx^*,\df(\bx^{k+1}) - \df(\bx^k)\rangle-2\|\bx^{k+1}-\bx^*\|_{I + \W - 2\BW}^2\\
 &    &+2\langle \bx^{k+1}-\bx^*,U(\bq^*-\bq^{k+1})\rangle+2\langle \bx^{k+1}-\bx^*,\BW(\bx^k-\bx^{k+1})\rangle\\
 & =  &\|\bx^{k+1}-\bx^*\|_{(\BW-\W)-2(I + \W - 2\BW)}^2+2\alpha\langle \bx^{k+1}-\bx^*,\df(\bx^{k+1}) - \df(\bx^k)\rangle\\
 &    &+2\langle \bx^{k+1}-\bx^*,U(\bq^*-\bq^{k+1})\rangle+2\langle \bx^{k+1}-\bx^*,\BW(\bx^k-\bx^{k+1})\rangle.
 \end{array}
\end{equation}

For the last three terms on the right-hand side of
\eqref{eq:linear_proof_2}, we have from Young's inequality
\begin{equation} \label{eq:temp2_term1}
\begin{array}{rcl}
&    &2\alpha\langle \bx^{k+1}-\bx^*, \df(\bx^{k+1}) - \df(\bx^k)\rangle\\
&\leq&\alpha\eta\|\bx^{k+1}-\bx^*\|_{\Fro}^2+\frac{\alpha}{\eta}\|\df(\bx^{k+1})-\df(\bx^k)\|_{\Fro}^2\\
&\leq&\alpha\eta\|\bx^{k+1}-\bx^*\|_{\Fro}^2+\frac{\alpha
\Lf^2}{\eta}\|\bx^{k+1} - \bx^k\|_{\Fro}^2,
\end{array}
\end{equation}
where $\eta>0$ is a tunable parameter and
\begin{equation}\label{eq:temp2_term2}
\begin{array}{c}
2\langle \bx^{k+1}-\bx^*,U(\bq^*-\bq^{k+1})\rangle=2\langle
\bq^{k+1}-\bq^k,\bq^*-\bq^{k+1}\rangle,
\end{array}
\end{equation}
and
\begin{equation}\label{eq:temp2_term3}
\begin{array}{rcl}
2\langle \bx^{k+1}-\bx^*,\BW(\bx^k-\bx^{k+1})\rangle=2\langle
\bx^{k+1}-\bx^k,\BW(\bx^*-\bx^{k+1})\rangle.
\end{array}
\end{equation}
Plugging \eqref{eq:temp2_term1}--\eqref{eq:temp2_term3} into
\eqref{eq:linear_proof_2}  and recalling the definition of
$\bz^k$, $\bz^*$, and $G$, we obtain
\begin{equation}\label{eq:linear_proof_3}
 \begin{array}{rcl}
&    &2\alpha \mug\|\bx^{k+1}-\bx^*\|_{\Fro}^2\\ &\leq&\|\bx^{k+1}-\bx^*\|_{(\BW-\W)-2(I + \W - 2\BW)}^2+\alpha\eta\|\bx^{k+1}-\bx^*\|_{\Fro}^2\\
 &    &+\frac{\alpha \Lf^2}{\eta}\|\bx^{k+1} - \bx^k\|_{\Fro}^2+2\langle \bz^{k+1}-\bz^k,G(\bz^*-\bz^{k+1})\rangle.
 \end{array}
\end{equation}
By $2\langle
\bz^{k+1}-\bz^k,G(\bz^*-\bz^{k+1})=\|\bz^k-\bz^*\|_G^2-\|\bz^{k+1}-\bz^*\|_G^2-\|\bz^k-\bz^{k+1}\|_G^2$,
\eqref{eq:linear_proof_3} turns into
\begin{equation}\label{eq:linear_proof_4}
 \begin{array}{rcl}
\|\bz^k-\bz^*\|_G^2-\|\bz^{k+1}-\bz^*\|_G^2
&\geq&\|\bx^{k+1}-\bx^*\|_{\alpha(2\mug-\eta)I-(\BW-\W)+2(I+\W-2\BW)}^2\\
&    &+\|\bz^k-\bz^{k+1}\|_G^2-\frac{\alpha \Lf^2}{\eta}\|\bx^k -
\bx^{k+1}\|_{\Fro}^2.
\end{array}
\end{equation}

\vspace{1em}

\noindent{\textbf{A critical inequality:}} In order to
establish \eqref{eq:main}, in light of \eqref{eq:linear_proof_4},
it remains to show
\begin{equation}\label{eq:linear_proof_5}
\begin{array}{rcl} & &\|\bx^{k+1}-\bx^*\|_{\alpha(2\mug-\eta)I-(\BW-\W)+2(I+\W-2\BW)}^2+\|\bz^k-\bz^{k+1}\|_G^2-\frac{\alpha \Lf^2}{\eta}\|\bx^k - \bx^{k+1}\|_{\Fro}^2\\
&\geq&\delta\|\bz^{k+1}-\bz^*\|_G^2.
\end{array}
\end{equation}
With the terms of $\bz^k$, $\bz^*$ in \eqref{eq:linear_proof_5}
expanded and from
$\|\bx^{k+1}-\bx^*\|_{\BW-\W}^2=\|U(\bx^{k+1}-\bx^*)\|_{\Fro}^2=\|U\bx^{k+1}\|_{\Fro}^2=\|\bq^{k+1}-\bq^k\|_{\Fro}^2$,
\eqref{eq:linear_proof_5} is equivalent to
\begin{equation}\label{eq:linear_proof_6}
 \begin{array}{c}
 \|\bx^{k+1}-\bx^*\|_{\alpha(2\mug-\eta)I+2(I+\W-2\BW)-\delta\BW}^2+\|\bx^k-\bx^{k+1}\|_{\BW-\frac{\alpha \Lf^2}{\eta}I}^2\geq\delta\|\bq^{k+1}-\bq^*\|_{\Fro}^2,
 \end{array}
\end{equation}
which is \emph{what remains to be shown below}. That is, we must
find a upper bound for $\|\bq^{k+1}-\bq^*\|_{\Fro}^2$ in terms of
$\|\bx^{k+1}-\bx^*\|_{\Fro}^2$ and $\|\bx^k-\bx^{k+1}\|_{\Fro}^2$.
\vspace{1em}

\noindent\textbf{Establishing \eqref{eq:linear_proof_6}, Step 1:} From
Lemma \ref{lemma:recursion} we have
\begin{equation} \label{eq:linear_proof_7}
\begin{array}{rcl}
& &\|U(\bq^{k+1} - \bq^*)\|_{\Fro}^2\\
&=&\|(I + \W - 2\BW) (\bx^{k+1} - \bx^*) + \BW (\bx^{k+1} - \bx^k)+\alpha [\df(\bx^k)-\df(\bx^*)]\|_{\Fro}^2\\
&=&\|(I + \W - 2\BW) (\bx^{k+1} - \bx^*)+\alpha [\df(\bx^{k+1})-\df(\bx^*)]\\
& &+ \BW(\bx^{k+1} - \bx^k)+\alpha
[\df(\bx^k)-\df(\bx^{k+1})]\|_{\Fro}^2.
\end{array}
\end{equation}
From the inequality
$\|\mathbf{a}+\mathbf{b}+\mathbf{c}+\mathbf{d}\|_{\Fro}^2\leq\theta\left(\frac{\beta}{\beta-1}\|\mathbf{a}\|_{\Fro}^2+\beta\|\mathbf{b}\|_{\Fro}^2\right)+\frac{\theta}{\theta-1}\left(\frac{\gamma}{\gamma-1}\|\mathbf{c}\|_{\Fro}^2+\gamma\|\mathbf{d}\|_{\Fro}^2\right)$,
which holds for any $\theta>1$, $\beta>1$, $\gamma>1$ and any
matrices $\mathbf{a}$, $\mathbf{b}$, $\mathbf{c}$, $\mathbf{d}$ of
the same dimensions, it follows that
\begin{equation}\label{eq:linear_proof_8}
\begin{array}{rcl}
&    &\|\bq^{k+1}-\bq^*\|_{\BW-\W}^2\\
&\leq&\theta\left(\frac{\beta}{\beta-1}\|\bx^{k+1}-\bx^*\|_{(I+\W-2\BW)^2}^2+\beta \alpha^2\|\df(\bx^{k+1})-\df(\bx^*)\|_{\Fro}^2\right)\\
&    &
+\frac{\theta}{\theta-1}\left(\frac{\gamma}{\gamma-1}\|\bx^k-\bx^{k+1}\|_{\BW^2}^2+\gamma
\alpha^2\|\df(\bx^k)-\df(\bx^{k+1})\|_{\Fro}^2\right).
\end{array}
\end{equation}
By Lemma \ref{lemma:opt} and the definition of $\bq^k$, all the
columns of  $\bq^*$ and $\bq^{k+1}$ lie in the column space of
$\BW-\W$. This together with the Lipschitz continuity of
$\df(\bx)$ turns \eqref{eq:linear_proof_8}  into
\begin{equation}\label{eq:linear_proof_9}
\begin{array}{rcl}
&    &\|\bq^{k+1}-\bq^*\|_{\Fro}^2\\
&\leq&\frac{\theta}{\tilde\lambda_{\min}(\BW-\W)}\left(\frac{\beta\lambda_{\max}((I+\W-2\BW)^2)}{\beta-1}+\beta \alpha^2\Lf^2\right)\|\bx^{k+1}-\bx^*\|_{\Fro}^2\\
&    &
+\frac{\theta}{(\theta-1)\tilde\lambda_{\min}(\BW-\W)}\left(\frac{\gamma\lambda_{\max}(\BW^2)}{\gamma-1}+\gamma
\alpha^2\Lf^2\right)\|\bx^k-\bx^{k+1}\|_{\Fro}^2,
\end{array}
\end{equation}
where $\tilde{\lambda}_{\min}(\cdot)$ gives the smallest
\emph{nonzero} eigenvalue. To make a rather tight bound, we choose
$\gamma=1+\frac{\sigma_{\max}(\BW)}{\alpha\Lf}$ and
$\beta=1+\frac{\sigma_{\max}(I+\W-2\BW)}{\alpha\Lf}$ in
\eqref{eq:linear_proof_9} and obtain
\begin{equation}\label{eq:linear_proof_10}
\begin{array}{rl}
    &\|\bq^{k+1}-\bq^*\|_{\Fro}^2\\
\leq&\frac{\theta(\sigma_{\max}(I+\W-2\BW)+\alpha
\Lf)^2}{\tilde\lambda_{\min}(\BW-\W)}\|\bx^{k+1}-\bx^*\|_{\Fro}^2+\frac{\theta(\sigma_{\max}(\BW)+\alpha
\Lf)^2}{(\theta-1)\tilde\lambda_{\min}(\BW-\W)}\|\bx^k-\bx^{k+1}\|_{\Fro}^2.
\end{array}
\end{equation}

\noindent\textbf{Establishing \eqref{eq:linear_proof_6}, Step 2:} In
order to establish \eqref{eq:linear_proof_6}, with
\eqref{eq:linear_proof_10}, it only remains to show
\begin{equation}\label{eq:linear_proof_11}
\begin{array}{rcl}
&&
\|\bx^{k+1}-\bx^*\|_{\alpha(2\mug-\eta)I+2(I+\W-2\BW)-\delta\BW}^2+\|\bx^k-\bx^{k+1}\|_{\BW-\frac{\alpha \Lf^2}{\eta}I}^2\\
 &\geq&\delta\left(\frac{\theta(\sigma_{\max}(I+\W-2\BW)+\alpha \Lf)^2}{\tilde\lambda_{\min}(\BW-\W)}\|\bx^{k+1}-\bx^*\|_{\Fro}^2+\frac{\theta(\sigma_{\max}(\BW)+\alpha \Lf)^2}{(\theta-1)\tilde\lambda_{\min}(\BW-\W)}\|\bx^k-\bx^{k+1}\|_{\Fro}^2\right).
\end{array}
\end{equation}
To validate \eqref{eq:linear_proof_11}, we need
\begin{equation}\label{eq:linear_proof_12}
\begin{array}{cl}
\left\{
  \begin{array}{c}
  \alpha(2\mug-\eta)I+2(I+\W-2\BW)-\delta\BW\succcurlyeq\frac{\delta\theta(\sigma_{\max}(I+\W-2\BW)+\alpha \Lf)^2}{\tilde\lambda_{\min}(\BW-\W)}I,\\
  \BW-\frac{\alpha \Lf^2}{\eta}I\succcurlyeq\frac{\delta\theta(\sigma_{\max}(\BW)+\alpha \Lf)^2}{(\theta-1)\tilde\lambda_{\min}(\BW-\W)}I,\\
  \end{array}
\right.
\end{array}
\end{equation}
which holds as long as
\begin{equation}\label{eq:explicit_rate}
\begin{array}{c}
\delta\leq\min\left\{\frac{\alpha(2\mug-\eta)\tilde\lambda_{\min}(\BW-\W)}{\theta(\sigma_{\max}(I+\W-2\BW)+\alpha
\Lf)^2+\lambda_{\max}(\BW)\tilde\lambda_{\min}(\BW-\W)},\frac{(\theta-1)(\eta\lambda_{\min}(\BW)-\alpha
\Lf^2)\tilde\lambda_{\min}(\BW-\W)}{\theta\eta(\sigma_{\max}(\BW)+\alpha
\Lf)^2} \right\}
\end{array}.
\end{equation}
To ensure $\delta>0$, the following conditions are what we finally
need:
\begin{equation}\label{eq:additional_cond2}
\begin{array}{c}
\eta\in\left(0,2\mug\right)\ \ \mathrm{and}\ \
\alpha\in\left(0,\frac{\eta\lambda_{\min}(\BW)}{\Lf^2}\right)\triangleq\mathcal{S}.
\end{array}
\end{equation}
Obviously set $\mathcal{S}$ is nonempty. Therefore, with a proper
step size $\alpha\in\mathcal{S}$, the sequences
$\|\bz^k-\bz^*\|_G^2$ is Q-linearly convergent to $0$ at the rate
$O\big((1+\delta)^{-k}\big)$. Since the definition of $G$-norm
implies $\|\bx^k-\bx^*\|_{\BW}^2\leq\|\bz^k-\bz^*\|_G^2$,
$\|\bx^k-\bx^*\|_{\BW}^2$ is R-linearly convergent to $0$ at the
same rate.\hfill
\end{proof}

\begin{remark}[Strong convexity condition for linear convergence]\label{remark:BW} The restricted strong convexity assumption in Theorem
\ref{theorem:linear} is imposed on
$\g(\bx)=\f(\bx)+\frac{1}{4\alpha}\|\bx\|_{\BW-\W}^2$, not on
$\f(\bx)$. In other words, the linear convergence of EXTRA  does
not require all $f_i$ to be individually (restricted) strongly
convex.
\end{remark}

\begin{remark}[Acceleration by overshooting $\BW$]
For conciseness, we used Assumption \ref{ass:matrices} for both
Theorems \ref{theorem:1_k} and \ref{theorem:linear}. In fact, for
Theorem \ref{theorem:linear}, the condition
$\frac{I+W}{2}\succcurlyeq \BW$
 in part 4 of Assumption \ref{ass:matrices} can be relaxed, thanks to $\mug$ in
\eqref{eq:linear_proof_12}. Certain
$\BW\succcurlyeq\frac{I+W}{2}$, such as $\BW=\frac{1.5I+W}{2.5}$,
can still give linear convergence. In fact, we observed that such
an ``overshot'' choice of $\BW$ can slightly accelerate the
convergence of EXTRA.
\end{remark}

\highlight{\begin{remark}[Step size optimization]We tried
deriving an optimal step size and corresponding explicit linear
convergence rate by optimizing certain quantities that appear in
the proof, but it becomes quite tricky and messy. For the special
case $\BW=\frac{I+\W}{2}$, by taking $\eta\rightarrow \mug$, we
get {a satisfactory} step size
$\alpha\rightarrow\frac{\mug(1+\lambda_{\min}(W))}{4\Lf^2}$.
\end{remark}}
%
\begin{remark}[Step size for ensuring linear convergence] Interestingly, the critical step size, $\alpha<\frac{2\mug\lambda_{\min}(\BW)}{\Lf^2}=O\left(\frac{\mug}{\Lf^2}\right)$, in \eqref{eq:additional_cond2} for ensuring the linear convergence, and the parameter $\alpha=\frac{\tilde\lambda_{\min}(\BW-W)}{2(1+\frac{1}{\gamma^2})(\mubarf-2\Lf\gamma)}=O\left(\frac{\mug}{\Lf^2}\right)$ in \eqref{eq:ass_equivalence_proof_2_5} for ensuring the restricted strong convexity with $O(\mug)=O(\mubarf)$, have the same order.

On the other hand, we numerically observed that a step size as large as $O\left(\frac{1}{\Lf}\right)$ still leads to linear convergence, and EXTRA becomes faster with this larger step size. It remains an open question to prove linear convergence under this larger step size. 
\end{remark}

\subsection{Decentralized implementation}

\highlight{We shall explain how to perform EXTRA with only local computation
and neighbor communication. EXTRA's formula is formed by
$\df(\bx)$, $\W\bx$ and $\BW\bx$, and $\alpha$. By definition
$\df(\bx)$ is local computation. Assumption \ref{ass:matrices}
part 1 ensures that $\W\bx$ and $\BW\bx$ can be computed with
local and neighbor information. Following our convergence theorems
above, determining $\alpha$ requires the bounds on $\Lf$ and
$\lambda_{\min}(\BW)$, as well as that of $\mug$ in the
(restricted) strongly convex case. As we have argued at the
beginning of Subsection \ref{sec:1_k}, it is easy to ensure
$\lambda_{\min}(\BW)\ge \frac{1}{2}$, so $\lambda_{\min}(\BW)$ can
be conservatively set as $\frac{1}{2}$. To obtain
$\Lf=\max_i\{L_{f_i}\}$, a maximum consensus algorithm is needed.
On the other hand, it is tricky to determine $\mug$ or its lower bound
$\mu_{\bar{f}}$, except in the case that each $f_i$ is (restricted)
strongly convex, we can conservatively use $\min_i\{\mu_{f_i}\}$.
When no bound $\mug$ is available in the (restricted) strongly
convex case, setting $\alpha$ according to the general convex case
(subsection \ref{sec:1_k}) often still leads to linear
convergence.}



\section{Numerical Experiments}\label{sc:num}

\subsection{Decentralized Least Squares}\label{sec:DLS}
Consider a decentralized sensing problem: each agent
$i\in\{1,\cdots,n\}$ holds its own measurement equation,
$y_{(i)}=M_{(i)} x+e_{(i)}$, where $y_{(i)}\in\R^{m_i}$ and
$M_{(i)}\in\R^{m_i\times p}$ are measured data, $x\in\mathbb{R}^p$
is unknown signal, and $e_{(i)}\in\mathbb{R}^{m_i}$ is unknown
noise. The goal is to estimate $x$. We apply the least squares
loss and try to solve
$$
\Min\limits_x \bar{f}(x)=\frac{1}{n}\sum\limits_{i=1}^n \frac{1}{2}\|M_{(i)} x-y_{(i)}\|_2^2.
$$
The network in this experiment is randomly generated with
connectivity ratio $r=0.5$, where $r$ is defined as the number of
edges divided by $\frac{L(L-1)}{2}$, the number of all possible
ones. We set $n=10$, $m_i=1, \forall i$, $p=5$. Data $y_{(i)}$ and $M_{(i)}$,
as well as noise $e_{(i)}$, $\forall i$, are generated following
the standard normal distribution. We normalize the data so that
$\Lf=1$. The algorithm starts from $x_{(i)}^{0}=0, \forall i$, and
$\|x^*-x_{(i)}^{0}\|=300$.

We use the same matrix $W$ by strategy (iv) in Section
\ref{sec:matrices} for both DGD and EXTRA. For EXTRA, we simply
use the aforementioned matrix $\BW=\frac{I+\W}{2}$. We run DGD
with a fixed step size $\alpha$, a diminishing one
$\frac{\alpha}{k^{1/3}}$ \cite{Jakovetic2014}, a diminishing one
$\frac{\alpha^0}{k^{1/3}}$ with hand-optimized $\alpha^0$, a
diminishing one $\frac{\alpha}{k^{1/2}}$ \cite{Chen2012}, and a
diminishing one $\frac{\alpha^0}{k^{1/2}}$ with hand-optimized
$\alpha^0$, where $\alpha$ is the theoretical critical step size
given in \cite{Kun2014}. We let EXTRA use the same fixed step size
$\alpha$.

The numerical results are illustrated in Fig. \ref{eps:num_LS}. In
this experiment, we observe that both DGD with the fixed step size
and EXTRA show similar linear convergence in the first $200$
iterations. Then DGD with the fixed step size begins to slow down
and eventually stall, and EXTRA continues its progress.
\begin{figure}[H]
\begin{center}
\includegraphics[height=4.8cm]{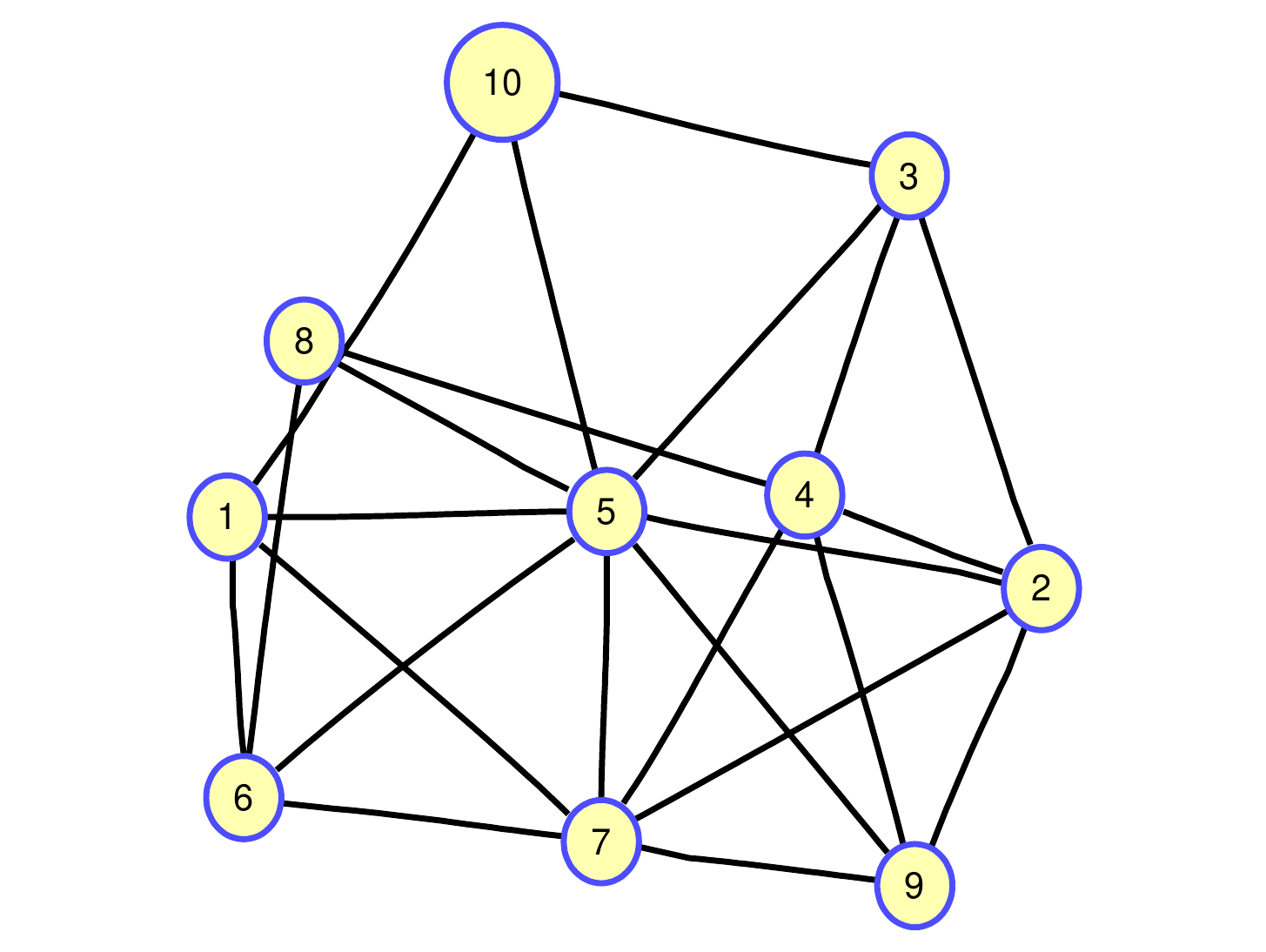}
\includegraphics[height=4.8cm]{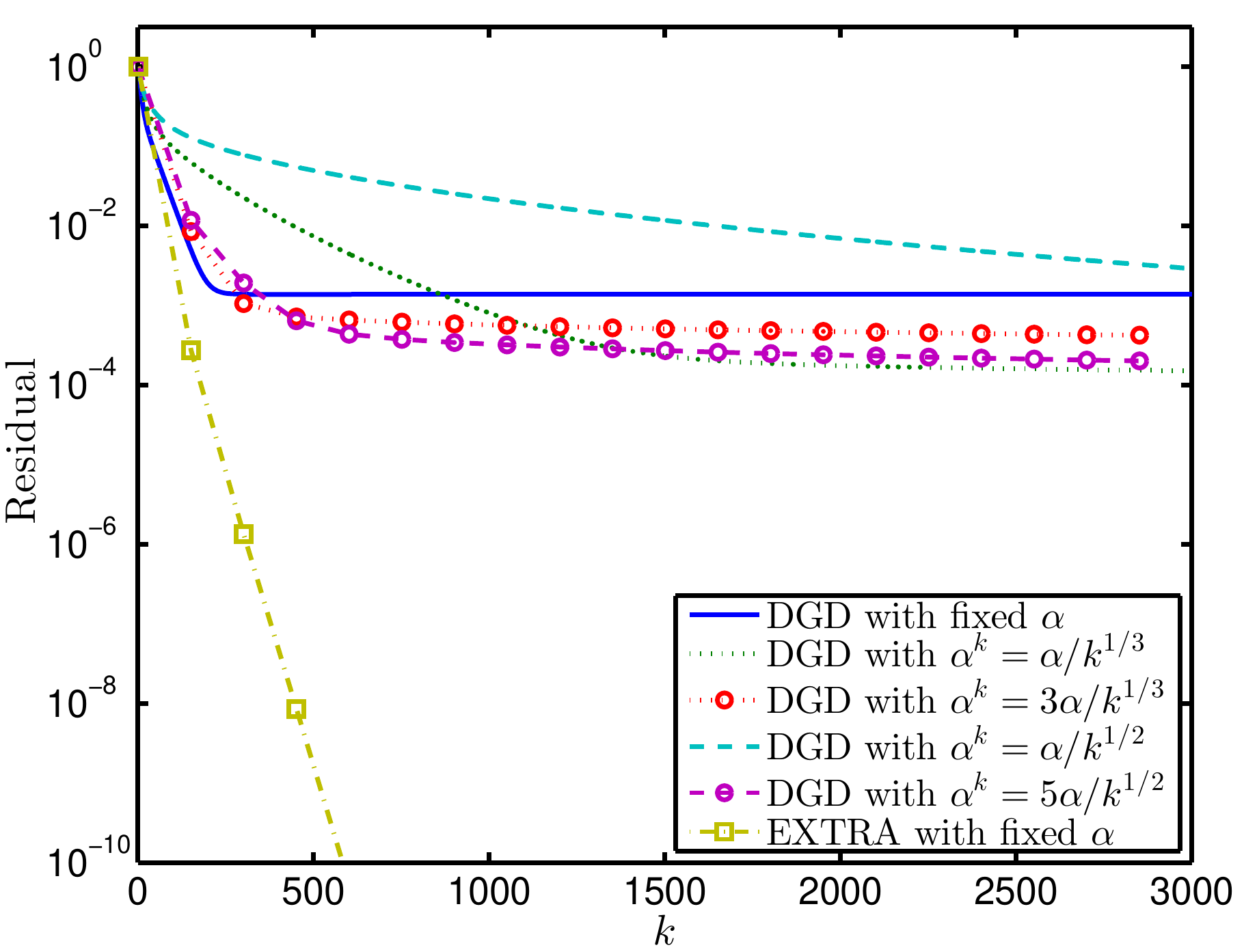}
\caption{Plot of residuals $\frac{\|\bx^k-\bx^*\|_\Fro}{\|\bx^0-\bx^*\|_\Fro}$. Constant $\alpha=0.5276$ is the theoretical critical step size given for DGD in \cite{Kun2014}. For DGD with diminishing step sizes $O(1/k^{1/3})$ and $O(1/k^{1/2})$, we have \emph{hand-optimized} their initial step sizes as $3\alpha$ and $5\alpha$, respectively.
}\label{eps:num_LS}
\end{center}
\end{figure}

\subsection{Decentralized Robust Least Squares}\label{sec:DRLS}
Consider the same decentralized sensing setting and network as in Section \ref{sec:DLS}. In this experiment, we use the Huber loss, which is known to be robust to outliers, and it allows us to observe both sublinear and linear convergence. We call the problem as decentralized robust least squares:
$$
\Min\limits_{x} \bar{f}(x)=\frac{1}{n}\sum\limits_{i=1}^n\left\{\sum\limits_{j=1}^{m_i}H_\xi(M_{(i)j} x-y_{(i)j})\right\},
$$
where $M_{(i)j}$ is the $j$-th row of matrix $M_{(i)}$ and $y_{(i)j}$ is the $j$-th entry of vector $y_{(i)}$. The Huber loss
function $H_\xi$ is defined as
$$H_\xi(a)=\left\{
           \begin{array}{cl}
             \frac{1}{2}a^2, &\text{ for $|a|\leq\xi$,\quad ($\ell_2^2$ zone)},\\
             \xi(|a|-\frac{1}{2}\xi), &\text{\hspace{2pt} otherwise,\hspace{8pt} ($\ell_1$ zone)}.\\
           \end{array}
         \right.$$
We set $\xi=2$. The optimal solution $x^*$ is artificially set in the
$\ell_2^2$ zone while $x^0_{(i)}$ is set in the $\ell_1$ zone at all agents $i$.

Except for new hand-optimized initial step sizes for DGD's diminishing step sizes, all other algorithmic parameters remain unchanged from the last test.

The numerical results are illustrated in Fig. \ref{eps:num_Huber}. EXTRA has sublinear convergence for the fist 1000 iterations and then begins linear convergence, {as $x_{(i)}^k$ for most $i$ enter the $\ell_2^2$ zone.}
\begin{figure}[H]
\begin{center}
\includegraphics[height=4.8cm]{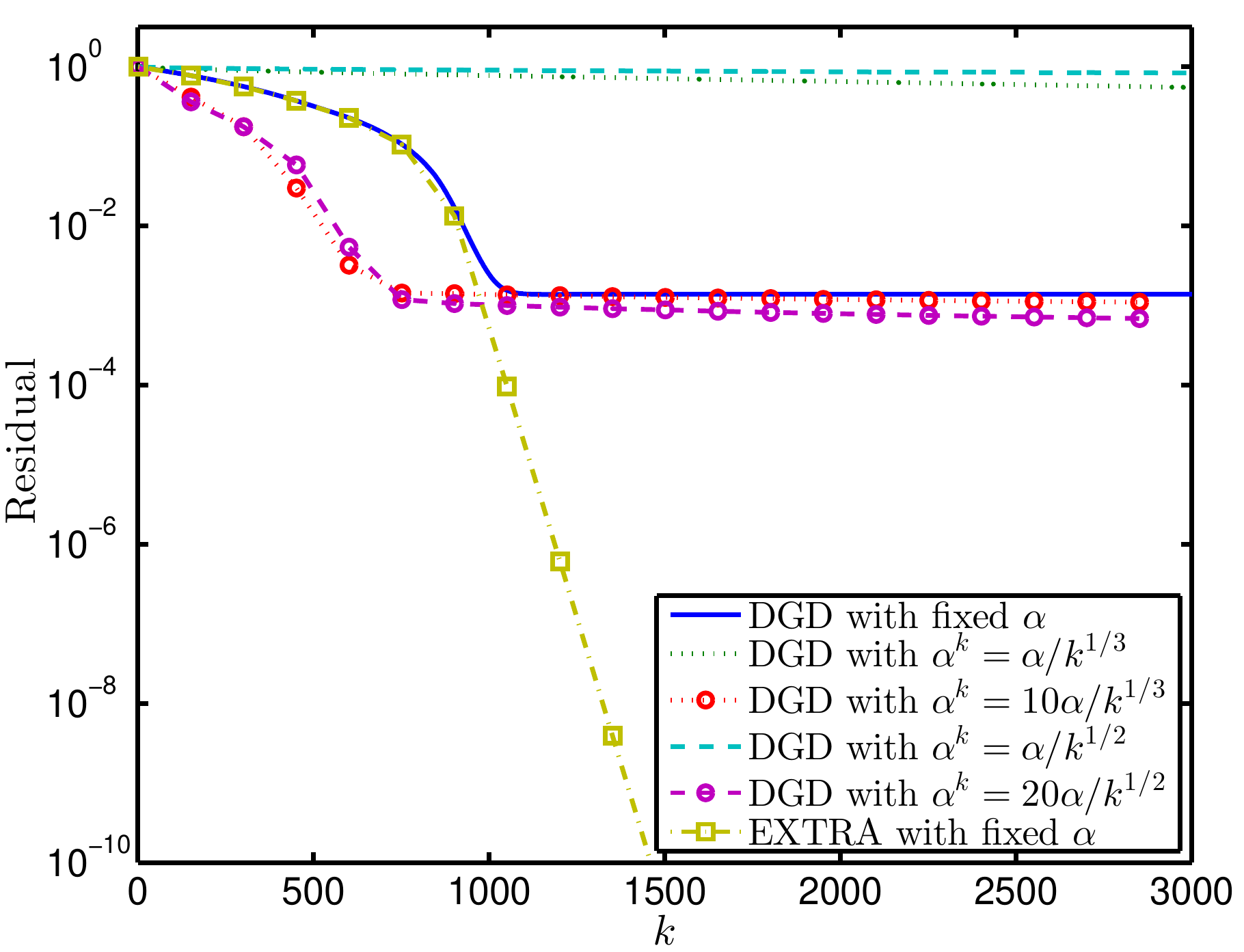}
\caption{Plot of residuals  $\frac{\|\bx^k-\bx^*\|_\Fro}{\|\bx^0-\bx^*\|_\Fro}$. Constant $\alpha=0.5276$ is the theoretical critical step size given for DGD in \cite{Kun2014}. For DGD with diminishing step sizes $O(1/k^{1/3})$ and $O(1/k^{1/2})$, we have \emph{hand-optimized} their initial step sizes as $10\alpha$ and $20\alpha$, respectively. The initial large step sizes have helped them (the red and purple curves) realize faster convergence initially.
}\label{eps:num_Huber}
\end{center}
\end{figure}

\subsection{Decentralized Logistic Regression}
Consider the decentralized logistic regression problem:
$$
\Min\limits_x \bar{f}(x)=\frac{1}{n}\sum\limits_{i=1}^n\left\{\frac{1}{m_i}\sum\limits_{j=1}^{m_i}
\ln\left(1+\exp\left(-(M_{(i)j}x)y_{(i)j}\right)\right)\right\},
$$
where every agent $i$ holds its training date $\left(M_{(i)j},y_{(i)j}\right)\in\R^p\times\{-1,+1\},\ j=1,\cdots,m_i$, including explanatory/feature variables $M_{(i)j}$ and binary output/outcome $y_{(i)j}$. To simplify the notation, we set the last entry of every $M_{(i)j}$ to $1$ thus the last entry of $x$ will yield the offset parameter of the logistic regression model.

We show a decentralized logistic regression problem solved by DGD and EXTRA over a medium-scale network. The settings
are as follows. The connected network is randomly generated with
$n=200$ agents and connectivity ratio $r=0.2$. Each agent holds $10$ samples, i.e., $m_i=10, \forall i$. The agents shall collaboratively obtain
$p=20$ coefficients via logistic regression. All the $2000$
samples are randomly generated, and the reference (ground true)
logistic classifier $x^*$ is pre-computed with a centralized
method. As it is easy to implement in practice, we use the Metropolis constant edge weight matrix $W$, which is mentioned by strategy (iii) in Section \ref{sec:matrices}, with $\epsilon=1$, and we use
$\BW=\frac{I+W}{2}$. The numerical results are illustrated in Fig.
\ref{eps:num_Logistic}. EXTRA outperforms DGD, showing linear and
exact convergence to the reference logistic classifier $x^*$.
\begin{figure}[H]
\begin{center}
\includegraphics[height=4.8cm]{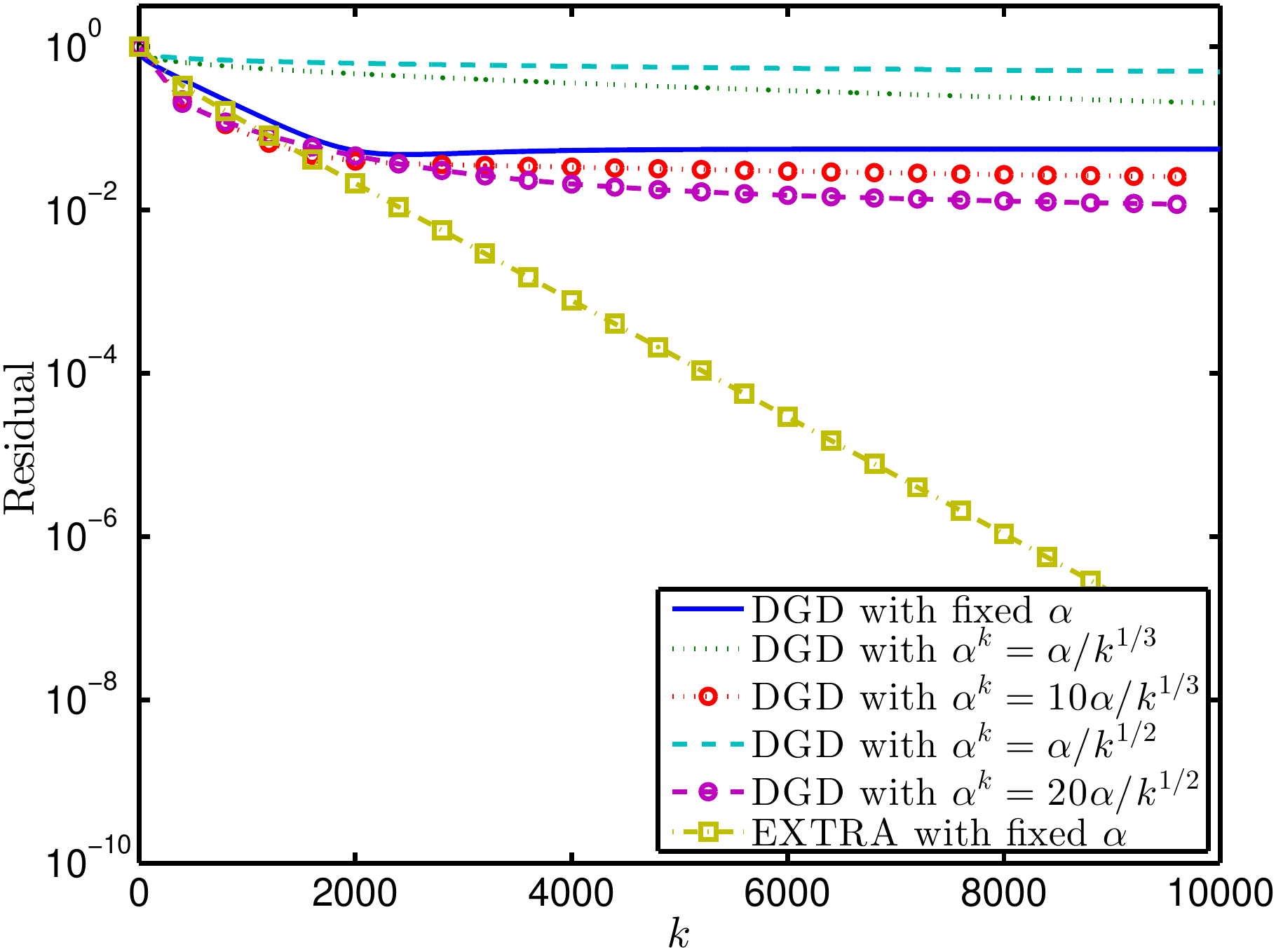}
\caption{Plot of residuals $\frac{\|\bx^k-\bx^*\|_\Fro}{\|\bx^0-\bx^*\|_\Fro}$. Constant $\alpha=0.0059$ is the theoretical critical step size given for DGD in \cite{Kun2014}. For DGD with diminishing step sizes $O(1/k^{1/3})$ and $O(1/k^{1/2})$, we have \emph{hand-optimized} their initial step sizes as $10\alpha$ and $20\alpha$, respectively.}\label{eps:num_Logistic}
\end{center}
\end{figure}

\section{Conclusion}\label{sc:concl}


As one of the fundamental method, gradient descent has been adapted to decentralized optimization, giving rise to simple and elegant iterations. In this paper, we attempted to address a dilemma or deficiency of the current decentralized gradient descent method: to obtain an accurate solution, it works slowly as it must use a small step size or iteratively diminish the step size; a large step size will lead to faster convergence  to, however, an  inaccurate solution. Our solution is an \uline{ex}act firs\uline{t}-orde\uline{r} \uline{a}lgorithm, EXTRA, which uses a fixed large step size and quickly returns an accurate solution. The claim is supported by both theoretical convergence and preliminary numerical results. On the other hand, EXTRA is far from perfect, 
and more work is needed to adapt it to the asynchronous and dynamic network settings. They are interesting open questions for future work.
\appendix
\section{Proof of Proposition \ref{prop:strongly_convex}}\label{sec:str_cvx_proof}
\begin{proof}\\

\vspace{-1.5em}

``(ii) $\Rightarrow$ (i)'': By definition of restricted strong convexity, there exists $\mug>0$ so that for any $\bx$,
\begin{equation} \label{eq:ass_equivalence_proof_1_1}
\begin{array}{rcl}
\mug \|\bx - \bx^*\|_{\Fro}^2 &\leq& \langle \dg(\bx) - \dg(\bx^*),\bx-\bx^*\rangle\\
&=& \langle \df(\bx) - \df(\bx^*),\bx-\bx^*\rangle + \frac{1}{2\alpha}\|\bx - \bx^*\|_{\BW-\W}^2.
\end{array}
\end{equation}
For any $x\in \R^p$, set $\bx = \one x^T$,  and from the above inequality, we get
\begin{equation} \label{eq:ass_equivalence_proof_1_2}
\begin{array}{rcl}
\mug \|x - x^*\|_2^2 &\leq& \frac{1}{n}\sum\limits_{i=1}^{n}\langle \nabla f_i(x) - \nabla f_i(x^*),x-x^*\rangle\\
&=&\langle \nabla \bar{f}(x) - \nabla \bar{f}(x^*), x-x^*\rangle.
\end{array}
\end{equation}
Therefore, $\bar{f}(x)$ is restricted strongly convex with a constant $\mubarf\triangleq\mug$.

``(i) $\Rightarrow$ (ii)'': For any $\bx\in\R^{n\times p}$, decompose $$\bx = \bu+\bv$$ so that every column of $\bu$ belongs to $\spa{\one}$ (i.e., $\bu$ is consensual) while that of $\bv$ belongs to $\spa{\one}^\perp$. Such an \emph{orthogonal} decomposition obviously satisfies $\|\bx\|_\Fro^2 = \|\bu\|_\Fro^2+\|\bv\|_\Fro^2$.  Since  solution $\bx^*$ is consensual and thus $\langle\bu-\bx^*,\bv \rangle = 0$, we also have $\|\bx-\bx^*\|_\Fro^2 = \|\bu-\bx^*\|_\Fro^2+\|\bv\|_\Fro^2$. In addition, being consensual, $\bu = \one u^T$ for some $u\in\R^p$. From the inequalities
\begin{eqnarray*}
\langle \df(\bu) - \df(\bx^*), \bu - \bx^* \rangle & = & n\frac{1}{n}\sum\limits_{i=1}^{n}\langle \nabla f_i(u) - \nabla f_i(x^*),u-x^*\rangle\\ & \geq & n\mubarf \|u - x^*\|^2_2=\mubarf\|\bu-\bx^*\|_\Fro^2,\\
\langle \df(\bx) - \df(\bu),\bx-\bu \rangle &\ge & 0,\\
\langle \df(\bu) - \df(\bx^*), \bx-\bu \rangle   &\ge& -\Lf\|\bu-\bx^*\|_\Fro\|\bv\|_\Fro,\\  \langle \df(\bx) - \df(\bu), \bu-\bx^* \rangle &\ge& -\Lf\|\bv\|_\Fro\|\bu-\bx^*\|_\Fro, \end{eqnarray*}
we get
\begin{equation} \label{eq:ass_equivalence_proof_2_1}
\begin{array}{rcl}
&    &\langle \df(\bx) - \df(\bx^*), \bx - \bx^* \rangle\\
&  = & \langle \df(\bu) - \df(\bx^*), \bu - \bx^* \rangle + \langle \df(\bx) - \df(\bu), \bx-\bu \rangle \\
&    &+\langle \df(\bu) - \df(\bx^*), \bx-\bu \rangle  +  \langle \df(\bx) - \df(\bu), \bu-\bx^* \rangle\\
&\geq&\mubarf\|\bu-\bx^*\|_\Fro^2 - 2\Lf\|\bu-\bx^*\|_\Fro\|\bv\|_\Fro.
\end{array}
\end{equation}
In addition, from the fact that $\bu-\bx^*\in\nul{\BW-W}$ and $\bv\in\spa{\BW-W}$, it follows that
\begin{equation} \label{eq:ass_equivalence_proof_2_2}
\begin{array}{rcl}
\frac{1}{2\alpha}\|\bx-\bx^*\|_{\BW-W}^2=\frac{1}{2\alpha}\|\bv\|_{\BW-W}^2\geq\frac{\tilde\lambda_{\min}(\BW-W)}{2\alpha}\|\bv\|_{\Fro}^2,
\end{array}
\end{equation}
where $\tilde\lambda_{\min}(\cdot)$ gives the smallest \emph{nonzero} eigenvalue of a positive semidefinite matrix.

Pick any $\gamma>0$. When $\|\bv\|_\Fro\leq\gamma\|\bu-\bx^*\|_\Fro$, it follows that
\begin{equation} \label{eq:ass_equivalence_proof_2_3}
\begin{array}{rcl}
&    &\langle \dg(\bx) - \dg(\bx^*), \bx - \bx^* \rangle\\
&  = &\langle \df(\bx) - \df(\bx^*),\bx-\bx^*\rangle + \frac{1}{2\alpha}\|\bx - \bx^*\|_{\BW-\W}^2\\
&\geq&\mubarf\|\bu-\bx^*\|_\Fro^2 - 2\Lf\|\bu-\bx^*\|_\Fro\|\bv\|_\Fro+\frac{\tilde\lambda_{\min}(\BW-W)}{2\alpha}\|\bv\|_{\Fro}^2\quad\text{(by \eqref{eq:ass_equivalence_proof_2_1} and \eqref{eq:ass_equivalence_proof_2_2})}\\
&\geq&(\mubarf-2\Lf\gamma)\|\bu-\bx^*\|_\Fro^2+\frac{\tilde\lambda_{\min}(\BW-W)}{2\alpha}\|\bv\|_{\Fro}^2\\
&\geq&\min\left\{\mubarf-2\Lf\gamma,\frac{\tilde\lambda_{\min}(\BW-W)}{2\alpha}\right\}\|\bx-\bx^*\|_\Fro^2.
\end{array}
\end{equation}
When $\|\bv\|_\Fro\geq\gamma\|\bu-\bx^*\|_\Fro$, it follows that
\begin{equation} \label{eq:ass_equivalence_proof_2_4}
\begin{array}{rcl}
&    &\langle \dg(\bx) - \dg(\bx^*), \bx - \bx^* \rangle\\
& =  &\langle \df(\bx) - \df(\bx^*), \bx - \bx^* \rangle+\frac{1}{2\alpha}\|\bx-\bx^*\|_{\BW-W}^2\\
&\geq&0+\frac{\tilde\lambda_{\min}(\BW-W)}{2\alpha}\|\bv\|_{\Fro}^2\quad\text{(applied convexity of $\f$ and \eqref{eq:ass_equivalence_proof_2_2})}\\
&\geq&\frac{\tilde\lambda_{\min}(\BW-W)}{2\alpha(1+\frac{1}{\gamma^2})}\|\bv\|_\Fro^2+\frac{\tilde\lambda_{\min}(\BW-W)}{2\alpha(1+\frac{1}{\gamma^2})}\|\bu-\bx^*\|_\Fro^2\\
&  = &\frac{\tilde\lambda_{\min}(\BW-W)}{2\alpha(1+\frac{1}{\gamma^2})}\|\bx-\bx^*\|_\Fro^2.
\end{array}
\end{equation}
Finally, in all conditions,
\begin{equation} \label{eq:ass_equivalence_proof_2_5}
\begin{array}{rcl}
&    &\langle \dg(\bx) - \dg(\bx^*), \bx - \bx^* \rangle\\ &\geq&\min\left\{\mubarf-2\Lf\gamma,\frac{\tilde\lambda_{\min}(\BW-W)}{2\alpha(1+\frac{1}{\gamma^2})}\right\}\|\bx-\bx^*\|_\Fro^2\triangleq \mug\|\bx-\bx^*\|_\Fro^2.
\end{array}
\end{equation}
By, for example, setting $\gamma=\frac{\mubarf}{4\Lf}$, we have $\mug>0$. Hence, function $\g$ is restricted strongly convex for any $\alpha>0$ as long as function $\bar{f}$ is restricted strongly convex.\hfill
\end{proof}

In the direction of  ``(ii) $\Rightarrow$ (i)'', we find $\mug<\mubarf$, unlike the more pleasant $\mubarf=\mug$ in the other direction. However, from \eqref{eq:ass_equivalence_proof_2_5}, we have
$$
\sup\limits_{\gamma,\alpha}\mug=\lim_{\gamma\rightarrow0^+}\mug\Big|_{\alpha=\frac{\tilde\lambda_{\min}(\BW-W)}{2(1+\frac{1}{\gamma^2})(\mubarf-2\Lf\gamma)}}=\mubarf,
$$
which means that $\mug$ can be arbitrarily close to $\mubarf$ as $\alpha$ goes to zero. On the other hand, just to have $O(\mug)=O(\mubarf)$, we can set $\gamma=O\left(\frac{\mubarf}{\Lf}\right)$ and $\alpha=\frac{\tilde\lambda_{\min}(\BW-W)}{2(1+\frac{1}{\gamma^2})(\mubarf-2\Lf\gamma)}=O\left(\frac{\mubarf}{\Lf^2}\right)=O\left(\frac{\mug}{\Lf^2}\right)$. This order of $\alpha$ coincides, in terms of order of magnitude,  with the critical step size for ensuring the linear convergence.


\bibliographystyle{siam}
\bibliography{document}
\end{document}